\numberwithin{equation}{section}
\newtheorem{thm}[equation]{Theorem}
\newtheorem{lem}[equation]{Lemma}
\newtheorem{cor}[equation]{Corollary}
\newtheorem{prop}[equation]{Proposition}
\newtheorem{qn}[equation]{Question}
\theoremstyle{definition}
\newtheorem{defn}[equation]{Definition}
\newtheorem{ex}[equation]{Example}
\newtheorem{exmp}[equation]{Example}
\theoremstyle{remark}
\newtheorem{rem}[equation]{Remark}
\theoremstyle{remark}
\newtheorem{rems}[equation]{Remarks}
\DeclareMathOperator{\Aut}{Aut}
\DeclareMathOperator{\GL}{GL}
\DeclareMathOperator{\End}{End}
\DeclareMathOperator{\Hom}{Hom}
\DeclareMathOperator{\SL}{SL}
\DeclareMathOperator{\Sp}{Sp}
\DeclareMathOperator{\So}{SO}
\DeclareMathOperator{\SU}{SU}
\DeclareMathOperator{\PGL}{PGL}
\DeclareMathOperator{\Lie}{Lie}
\DeclareMathOperator{\Ad}{Ad}
\DeclareMathOperator{\rk}{rk}
\DeclareMathOperator{\ad}{ad}
\newcommand{\sat}{{\rm sat}}
\newcommand{\BBF}{\mathbb{F}}
\newcommand{\BBN}{\mathbb{N}}
\newcommand{\BBG}{\mathbb{G}}
\newcommand{\NN}{\mathcal{N}}
\newcommand{\UU}{\mathcal{U}}
\renewcommand{\gg}{\mathfrak g}
\newcommand{\cc}{\mathfrak c}
\newcommand{\zz}{\mathfrak z}
\newcommand{\mm}{\mathfrak m}
\newcommand{\gl}{\mathfrak{gl}}
\renewcommand{\sl}{\mathfrak{sl}}
\newcommand{\lima}{\lim\limits_{a\to 0}}
\newcommand{\limau}{\lim\limits_{a\to 0} \lambda(a)u\lambda(a)^{-1}}
\newcommand{\limaut}{\lim\limits_{a\to 0} \lambda(a)u^t\lambda(a)^{-1}}
\newcommand{\twobytwo}[4]{{\left(\begin{array}{ll} #1 & #2 \\ #3 & #4 \\\end{array}\right)}}
\subjclass[2010]{20G15 (14L24)}
\keywords{$G$-complete reducibility; saturation; finite groups of Lie type}
\title[$G$-complete reducibility and saturation]
{$G$-complete reducibility and saturation}
\author[M.\  Bate]{Michael Bate}
\address
{Department of Mathematics,
University of York,
York YO10 5DD,
United Kingdom}
\email{michael.bate@york.ac.uk}
\author[S. B\"ohm]{S\"oren B\"ohm}
\address
{Fakult\"at f\"ur Mathematik,
	Ruhr-Universit\"at Bochum,
	D-44780 Bochum, Germany}
\email{soeren.boehm@rub.de}
\author[A.\ Litterick]{Alastair Litterick}
\address{School of Mathematics, Statistics and Actuarial Science, University of Essex, Wivenhoe Park, Colchester, Essex CO4 3SQ, United Kingdom}
\email{a.litterick@essex.ac.uk}
\author[B.\ Martin]{Benjamin Martin}
\address
{Department of Mathematics,
University of Aberdeen,
King's College,
Fraser Noble Building,
Aberdeen AB24 3UE,
United Kingdom}
\email{b.martin@abdn.ac.uk}
\author[G. R\"ohrle]{Gerhard R\"ohrle}
\address
{Fakult\"at f\"ur Mathematik,
Ruhr-Universit\"at Bochum,
D-44780 Bochum, Germany}
\email{gerhard.roehrle@rub.de}
\begin{document}

\begin{abstract}
	Let $H \subseteq G$ be connected reductive linear algebraic  groups defined over an algebraically closed field of characteristic $p> 0$.
	In our first main theorem we show that if a closed subgroup $K$ of $H$ is $H$-completely reducible, then it is also $G$-completely reducible in the sense of Serre, under some restrictions on $p$, generalising the known case for $G = \GL(V)$.  Our proof uses R.W. Richardson's notion of reductive pairs to reduce to the $\GL(V)$ case.
	We study Serre's notion of saturation and prove that saturation behaves well with respect to products and regular subgroups.  Our second main theorem shows that if $K$ is $H$-completely reducible, then the saturation of $K$ in $G$ is completely reducible in the saturation of $H$ in $G$ (which is again a connected reductive subgroup of $G$), under suitable restrictions on $p$, again generalising the known instance for $G = \GL(V)$.
	We also study saturation of finite subgroups of Lie type in $G$.  We show that saturation is compatible with standard Frobenius endomorphisms, and we use this to generalise a result due to Nori from  1987 in case $G = \GL(V)$.
\end{abstract}

\maketitle

\section{Introduction and main results}
\label{sec:intro}

Let $G$ be a connected reductive linear algebraic group over an algebraically closed field $k$ of characteristic $p> 0$.  Let $H$ be a closed subgroup of $G$.
Following  Serre \cite{serre2}, we say that 
$H$ is \emph{$G$-completely reducible} ($G$-cr for short) 
provided that whenever $H$ is contained in a parabolic subgroup $P$ of $G$,
it is contained in a Levi subgroup of $P$. Further,  $H$ is \emph{$G$-irreducible} ($G$-ir for short) provided $H$ is not contained in any proper parabolic subgroup of $G$ at all. 
Clearly, if $H$ is $G$-irreducible, it is trivially $G$-completely reducible; 
for an overview of this concept see \cite{BMR}, \cite{serre1} and \cite{serre2}.
Note in case $G = \GL(V)$ a subgroup $H$ is $G$-cr exactly when 
$V$ is a semisimple $H$-module and it is $G$-ir precisely when $V$ is an irreducible $H$-module. The same equivalence applies to $G = \SL(V)$.   The notion of $G$-complete reducibility is a powerful tool for investigating the subgroup structure of $G$: see \cite{LT}, \cite{LST}.

Now suppose $H$ is connected and reductive, and let $K$ be a closed subgroup of $H$.  It is natural to ask the following questions:

\begin{qn}
\label{qn:ascent}
 If $K$ is $H$-completely reducible, must $K$ be $G$-completely reducible?
\end{qn}

\begin{qn}
\label{qn:descent}
 If $K$ is $G$-completely reducible, must $K$ be $H$-completely reducible?
\end{qn}

\noindent The answer to Question~\ref{qn:ascent} is no in general.  For instance, if $H$ is a non-$G$-cr subgroup of $G$ and $K= H$ then $K$ is $H$-ir but $K$ is not $G$-cr.  For a more complicated example showing that the answer to both Questions~\ref{qn:ascent} and \ref{qn:descent} is no, see \cite[Ex.\ 3.45]{BMR}, or \cite[Prop.~7.17]{BMRT}.   On the other hand, if $p> 2$ and $H$ is either the special orthogonal group $\So(V)$ or the symplectic $\Sp(V)$ with its natural embedding in $G:= \GL(V)$ then $K$ is $G$-cr if and only if $K$ is $H$-cr (see \cite[Ex.\ 3.23]{BMR}).

In this paper we consider some variations on Questions~\ref{qn:ascent} and \ref{qn:descent}.  To do this we use two key tools: reductive pairs and saturation.  Our first main result shows that the answer to Question~\ref{qn:ascent} is yes if we impose some extra conditions on $p$.  To state our result we need some notation.  We define an invariant $d(G)$ of $G$ as follows.
For $G$ simple let 
$d(G)$ be as follows:
	\begin{center}
	\begin{tabular}{c|*{9}{c}}
		$G$ & $A_n \ (n\ge1)$ & $B_n \ (n\ge3)$ & $C_n \  (n\ge2)$ & $D_n \  (n\ge4)$ & $E_6$ & $E_7$ & $E_8$ & $F_4$ & $G_2$ \\ \hline
		$d(G)$ & $n+1$ & $2n+1$ & $2n$ & $2n$ & $27$ & $56$ & $248$ & $26$ & $7$
	\end{tabular}
\end{center}
For $G$ reductive, let $d(G) = \max(1, d(G_1), \ldots, d(G_r))$, 
where  $G_1, \ldots, G_r$ are the simple components of $G$.
For $G$ simple and simply-connected and $p$ good for $G$, $d(G)$ is the minimal possible dimension of a non-trivial irreducible $G$-module. 

\begin{thm}
\label{thm:main}
 Let $H \subseteq G$ be connected reductive groups and let $K$ be a closed subgroup of $H$. Suppose $p \ge d(G)$. 
 If $K$ is $H$-completely reducible, then $K$ is $G$-completely reducible.
\end{thm}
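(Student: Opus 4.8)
The plan is to reduce to the case $G=\GL(V)$, which is Theorem~\ref{thm:main_GL}.

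First I would run a chain of reductions on $G$, relying on the standard facts that complete reducibility is unaffected by passing to an isogenous group or by dividing out a central torus, that $d$ depends only on the isogeny types of the simple factors, that a subgroup of a direct product is completely reducible exactly when each of its projections is, and that the image of an $H$-completely reducible subgroup under a surjection of connected reductive groups is completely reducible in the image. Lifting the connected reductive subgroup $H$ and the subgroup $K$ through the isogenies involved — here one must allow inseparable isogenies, e.g.\ $\SL_p\to\PGL_p$ in characteristic $p$, for which the bijections between parabolic subgroups, Levi subgroups and cocharacters still carry complete reducibility through — this reduces the theorem to the case in which $G$ is simple and is the particular isogeny form carrying a faithful irreducible module $V$ of dimension exactly $d(G)$: the natural module for $\SL_{n+1}$ and $\Sp_{2n}$, and (after passing down from the spin groups, which is harmless since $p\geq d(G)>2$) for $\So_{2n+1}$ and $\So_{2n}$; the $27$- and $56$-dimensional minuscule modules for simply connected $E_6$ and $E_7$; the $26$- and $7$-dimensional modules for $F_4$ and $G_2$; and the adjoint module for $E_8$.

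In this situation $G$, and hence $H$, is a closed subgroup of $\GL(V)$ with $p\geq\dim V$, so Theorem~\ref{thm:main_GL} applies and shows that $K$ is $\GL(V)$-completely reducible. It then remains to descend along $G\subseteq\GL(V)$, i.e.\ to show that a $\GL(V)$-completely reducible subgroup of $G$ is $G$-completely reducible. Here the point is that, since $p\geq\dim V$, Jantzen's theorem (recalled in the introduction) makes $V$ a semisimple $G$-module, so $G$ is itself a $\GL(V)$-completely reducible subgroup of $\GL(V)$; and since $p\geq d(G)$ is large, $(\GL(V),G)$ is moreover a reductive pair — with the one borderline family $\SL_p\subseteq\GL_p$ handled instead via normality of $G$ in $\GL(V)$ — and all the relevant subgroups are separable. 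Given $K\subseteq P_\lambda(G)$ with $\lambda\in Y(G)$, semisimplicity of $V|_K$ splits the $\lambda$-filtration of $V$ $K$-equivariantly, producing $u\in R_u(P_\lambda(\GL(V)))$ with $uKu^{-1}\subseteq L_\lambda(\GL(V))$; the reductive-pair property makes $\Lie(R_u(P_\lambda(G)))$ a $K$-module direct summand of $\Lie(R_u(P_\lambda(\GL(V))))$, so the obstruction to performing such a splitting inside $G$ maps injectively to the (vanishing) obstruction inside $\GL(V)$ and therefore vanishes, and separability then converts this into an actual element of $R_u(P_\lambda(G))=G\cap R_u(P_\lambda(\GL(V)))$ conjugating $K$ into $L_\lambda(G)$. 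Hence $K$ is $G$-completely reducible.

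I expect the descent step to be the crux; the reductions, though long, are routine apart from the bookkeeping around inseparable isogenies and the choice of small faithful form, and apart from checking that the descent genuinely goes through at the boundary primes $p=\dim V$ (for instance $p=7$ for $G_2$, or $p$ equal to the characteristic for $\SL_p$), where the reductive-pair property has to be inspected, or sidestepped, directly. Finally, it is worth remarking why the hypothesis is $p\geq d(G)$ and not merely $p\geq a(G)$: Theorem~\ref{thm:ag} already yields that $H$ is $G$-completely reducible, but a completely reducible subgroup of a completely reducible subgroup need not be completely reducible in the ambient group when $p$ is small; it is precisely the bound $p\geq d(G)\geq\dim V$, which forces $V$ to be semisimple for $G$ and hence $(\GL(V),G)$ to be a reductive pair, that rules this out.
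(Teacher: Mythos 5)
Your proposal is correct and follows essentially the same route as the paper's proof: reduce to $G$ simple of a convenient isogeny type via Lemma \ref{lem:epimorphisms}, apply Theorem \ref{thm:main_GL} to the minimal module $V$ to conclude that $K$ is $\GL(V)$-completely reducible, and then descend to $G$ through the reductive pair $(\GL(V),G)$ together with automatic separability in $\GL(V)$ --- which is precisely Theorem \ref{thm:redpairs} combined with Corollary \ref{red-pair-gl}, so your hand-sketched splitting/obstruction argument for the descent can simply be replaced by that citation. One point where you are in fact more careful than the printed proof: for type $A_n$ the bound $p\ge d(G)=n+1$ does not force $p$ to be very good when $p=n+1$, so the reductive pair genuinely fails for $\SL_p$ in characteristic $p$; the paper's proof asserts very goodness without comment, whereas your separate treatment of this borderline family is the right fix (most cleanly done not via normality but via the equivalence, noted in the introduction, that a subgroup of $\SL(V)$ is $\SL(V)$-cr exactly when $V$ is a semisimple module, i.e.\ exactly when it is $\GL(V)$-cr).
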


\noindent Theorem~\ref{thm:main} generalises \cite[Thm.\ 1.3]{BHMR}, which is the special case when $G= \GL(V)$.  We note that Theorem~\ref{thm:main} is false without the 
bound on $p$: e.g., see \cite[Ex.~3.44]{BMR} or Example \ref{ex:notHsigmacr}.

To prove Theorem~\ref{thm:main} we use Richardson's theory of reductive pairs (see Definition~\ref{def:redpair}) to reduce to the case $G= \GL(V)$, then we apply \cite[Thm.\ 1.3]{BHMR}.  We require a careful analysis of when the Lie algebra of $G$ admits a nondegenerate $\Ad$-invariant bilinear form, building on the discussions in \cite{rich2} and \cite{slodowy} (see also \cite[Sec.\ 3.5]{BMR}).  We establish the results we need in Theorem~\ref{thm:redpairs}.

Our second main result involves the concept of saturation due to Serre.  Let $h(G)$ and $\widetilde{h}(G)$ be as defined in Section~\ref{sec:coxeter} (if $G$ is simple then $h(G)$ is the Coxeter number of $G$).  If $p \geq h(G)$ then for any unipotent $u\in G$ and any $t\in k$ one obtains a unipotent element $u^t$ of $G$ using versions of the matrix exponential and logarithm maps; see Section~\ref{sec:sat} for details.  If $H$ is a closed subgroup of $G$ then we call $H$ \emph{saturated} if $u^t\in H$ for every unipotent $u\in H$ and every $t\in k$.  We denote by $H^{\rm sat}$ the smallest closed saturated subgroup that contains $H$.  Saturation was used by Serre in his study of $G$-complete reducibility in \cite{serre1} and \cite{serre2} (see Theorem~\ref{thm:saturation}).

\begin{thm}
\label{thm:sat_cr}
 Let $G$, $H$ and $K$ be as in Theorem~\ref{thm:main}.   
 Suppose $p \ge d(G)$ and $p\geq \widetilde{h}(G)$.
If $K$ is $H$-completely reducible then $K^\sat$ is $H^\sat$-completely reducible.
\end{thm}

\noindent (Note that $K$ is $G$-cr under the hypotheses of Theorem~\ref{thm:sat_cr}, by Theorem~\ref{thm:main}.)  For a direct application of saturation to Questions~\ref{qn:ascent} and \ref{qn:descent}, see Proposition~\ref{prop:HcrvsGcr}.

In order to prove Theorem~\ref{thm:sat_cr} we establish some results on saturation that are of interest in their own right.  These include Remark~\ref{rem:regularsaturated}, Lemma~\ref{lem:products} and Corollary~\ref{cor:product_proj}.

We also give some versions of our main results for finite groups of Lie type.  Recall that a \emph{Steinberg endomorphism} of $G$ is a surjective morphism $\sigma:G\to G$ such that the 
corresponding fixed point subgroup $G_\sigma :=\{g \in G \mid \sigma(g) = g\}$ of $G$ is finite. The latter are the 
\emph{finite groups of Lie type}; see 
Steinberg \cite{steinberg:end} for a detailed discussion.
The set of all Steinberg endomorphisms of $G$ is a subset of the set of all 
isogenies $G\rightarrow G$ (see \cite[7.1(a)]{steinberg:end}) 
which encompasses in particular all generalized Frobenius endomorphisms, 
i.e., endomorphisms of $G$ some power of which are Frobenius endomorphisms 
corresponding to some $\BBF_q$-rational structure on $G$.  If $H$ is a connected reductive $\sigma$-stable subgroup $H$ of $G$ then $\sigma$ is also a Steinberg endomorphism for $H$ with finite fixed point subgroup  $H_\sigma = H \cap G_\sigma$, \cite[7.1(b)]{steinberg:end}.

\begin{cor}
	\label{cor:finite}
	 Let $H \subseteq G$ be connected reductive groups. 
 Let $\sigma\colon G\to G$ be a Steinberg endomorphism that stabilises $H$.  
 Suppose $p \ge d(G)$.
 Then the fixed point subgroup $H_\sigma$ is $G$-completely reducible.
\end{cor}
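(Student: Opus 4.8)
The plan is to obtain this as an immediate application of Theorem~\ref{thm:main}, once we recognise $H_\sigma$ as an $H$-completely reducible subgroup of $H$. The first step is to pin down the behaviour of $\sigma$ on $H$: since $H$ is connected reductive and $\sigma$-stable, the restriction $\sigma|_H$ is again a Steinberg endomorphism of $H$, and its fixed point subgroup is $H_\sigma = H \cap G_\sigma$, which is finite. This is exactly \cite[7.1(b)]{steinberg:end}, so no work is needed here beyond citing it.

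The second step is to apply Lemma~\ref{lem:GsigmaGcr} with $H$ playing the role of $G$. Since $\sigma|_H$ is a Steinberg endomorphism of the connected reductive group $H$, the lemma tells us that $H_\sigma$ is $H$-irreducible; in particular, $H_\sigma$ is $H$-completely reducible. The final step is then to feed this into Theorem~\ref{thm:main}, taking $K = H_\sigma$: as $H \subseteq G$ are connected reductive, $p \ge d(G)$, and $K$ is $H$-cr, the theorem yields that $K = H_\sigma$ is $G$-completely reducible, which is the assertion of the corollary.

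I do not expect a genuine obstacle: all of the substance has already been absorbed into Theorem~\ref{thm:main} and Lemma~\ref{lem:GsigmaGcr}, and the corollary is a formal consequence. The only point that requires a moment's attention is the first step, namely that a Steinberg endomorphism of $G$ restricts to a Steinberg endomorphism of a $\sigma$-stable connected reductive subgroup $H$ (so that $H_\sigma$ is finite and Lemma~\ref{lem:GsigmaGcr} genuinely applies to $H$); but this is standard and is recorded in Steinberg's memoir, so the argument really does reduce to a two-line deduction. One might also note in passing that Theorem~\ref{thm:main} only imposes the hypothesis $p \ge d(G)$ (and not any bound involving $H$), so there is nothing further to verify on that front.
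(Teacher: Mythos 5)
Your proposal is correct and matches the paper's own argument exactly: restrict $\sigma$ to $H$ via \cite[7.1(b)]{steinberg:end}, apply Lemma~\ref{lem:GsigmaGcr} to $H$ to get that $H_\sigma$ is $H$-irreducible (hence $H$-cr), and then conclude with Theorem~\ref{thm:main} applied to $K = H_\sigma$. Nothing to add.
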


\begin{thm}
	\label{thm:nori}
		Suppose $G$ is simple.
	Let $p \ge \widetilde{h}(G)$. 	
	Let $\sigma$ be a standard Frobenius endomorphism of $G$ and let $H$ be a connected reductive, $\sigma$-stable, and saturated  subgroup of $G$. 
	Then $(H_\sigma)^\sat = H$.
\end{thm}
 
\noindent Theorem~\ref{thm:nori} generalises a theorem of Nori \cite[Thm.~B(2)]{nori}; Nori's result is the special case of Theorem~\ref{thm:nori} for $G = \GL_n$ and $\sigma =\sigma_p$ the standard Frobenius endomorphism of $G$ raising the matrix coefficients to the $p^{\rm th}$ power.  In that context Proposition~\ref{prop:frobsat} is of independent interest which says that saturation is compatible with standard Frobenius endomorphisms.

\bigskip
Section~\ref{sec:prelim} contains some preliminary material and Section~\ref{sec:redpair} deals with reductive pairs.  We prove Theorem~\ref{thm:main} and 
Corollary~\ref{cor:finite} in Section 
\ref{sec:proofs}.  Results on saturation, including the proof of Theorem~\ref{thm:sat_cr}, are treated in Section \ref{sec:sat}.  In Section \ref{sec:sat-finite} we study saturation for finite subgroups of Lie type. Here we prove Theorem~\ref{thm:nori}, among other results.
Finally, Section \ref{sec:semisat} then explores the connection between saturation and the concept of a semisimplification of a subgroup of $G$ from \cite{BMR:semisimplification}.

\section{Preliminaries}
\label{sec:prelim}

Throughout, we work over an algebraically closed field $k$ of characteristic $p\geq 0$. All affine varieties are considered over $k$ and are identified with their $k$-points.

A linear algebraic group $H$ over $k$ has identity component $H^\circ$; if $H=H^\circ$, then we say that $H$ is \emph{connected}.
We denote by $R_u(H)$ the \emph{unipotent radical} of $H$; if $R_u(H)$ is trivial, then we say $H$ is \emph{reductive}.

Throughout, $G$ denotes a connected reductive linear algebraic group over $k$. All subgroups of $G$ that are considered are closed.

\subsection{Good and very good primes}
Suppose $G$ is simple.
Fix a Borel subgroup $B$ of $G$ containing a maximal torus $T$.
Let $\Phi = \Phi(G,T)$ be the root system of $G$ with respect to $T$,  
let $\Phi^+ = \Phi(B,T)$ be the set of positive roots of $G$, and let 
$\Sigma = \Sigma(G, T)$   
be the set of simple roots of the root system $\Phi$ of $G$ defined by $B$. 
For $\beta \in \Phi^+$ write 
$\beta = \sum_{\alpha \in \Sigma} c_{\alpha\beta} \alpha$
with $c_{\alpha\beta} \in \mathbb N_0$.
A prime $p$ is said to be \emph{good} for $G$
if it does not divide $c_{\alpha\beta}$ for any $\alpha$ and $\beta$.
A prime $p$ is said to be \emph{very good} for $G$ if 
$p$ is a good prime for $G$ and in case $G$ is of type 
$A_n$, then $p$ does not divide $n+1$.
For $G$ reductive $p$ is \emph{good} (\emph{very good}) for $G$ if $p$ is good  (very good) for every simple component of $G$.

\subsection{Limits and parabolic subgroups}
	Let $\phi:k^*\to X$ be a morphism of algebraic varieties. We say $\lima \phi(a)$ exists if there is a morphism $\hat{\phi}:k\to X$ (necessarily unique) whose restriction to $k^*$ is $\phi$; if the limit exists, then we set $\lima \phi(a)=\hat{\phi}(0)$.
As a direct consequence of the definition we have the following:

\begin{rem}\label{rem:mophisms_lim}
	If $\phi:k^*\to X$ and $h:X\to Y$ are morphisms of varieties and $x:=\lima\phi(a)$ exists then $\lima(h\circ\phi)(a)$ exists, and $\lima(h\circ\phi)(a)=h(x)$.
\end{rem}

For an algebraic group $G$ we denote by $Y(G)$ the set of cocharacters of $G$. For  
$\lambda \in Y(G)$ we define 
$P_\lambda:=\{g\in G\mid \lima \lambda(a)g\lambda(a)^{-1}\ \text{exists}\}$.

\begin{lem}[{\cite[Lem.~2.4]{BMR}}]
	\label{lem:Plambda}
	Given a parabolic subgroup $P$ of $G$ and any Levi subgroup $L$ of $P$, there exists a $\lambda\in Y(G)$ such that the following hold:
	\begin{enumerate}[label=\rm(\roman*)]
		\item $P=P_\lambda$.
		\item $L=L_\lambda:=C_G(\lambda(k^*))$.
		\item The map $c_\lambda:P_\lambda\to L_\lambda$ defined by 
		\[c_\lambda(g):=\lima \lambda(a)g\lambda(a)^{-1}\]
		is a surjective homomorphism of algebraic groups. Moreover, $L_\lambda$ is the set of fixed points of $c_\lambda$ and $R_u(P_\lambda)$ is the kernel of $c_\lambda$.
	\end{enumerate}
	Conversely, given any $\lambda\in Y(G)$ the subset $P_\lambda$ defined above is a parabolic subgroup of $G$, $L_\lambda$ is a Levi subgroup of $P_\lambda$ and the map $c_\lambda$ as defined in (iii) has the described properties.
\end{lem}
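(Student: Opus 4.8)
The plan is to reduce everything to combinatorics of root subgroups relative to a fixed maximal torus, establishing the converse assertion first since it is logically prior. Let $\lambda\in Y(G)$. Since $\lambda(k^*)$ is a torus it lies in some maximal torus $T$ of $G$, so we may assume $\lambda\in Y(T)$. Write $\Psi=\Psi(G,T)$ for the root system, $U_\alpha$ for the root subgroup attached to $\alpha\in\Psi$, and $\langle\alpha,\lambda\rangle\in\ZZ$ for the integer with $\alpha(\lambda(a))=a^{\langle\alpha,\lambda\rangle}$. The subset $\Psi_{\ge0}:=\{\alpha\in\Psi\mid\langle\alpha,\lambda\rangle\ge0\}$ is closed and satisfies $\Psi_{\ge0}\cup(-\Psi_{\ge0})=\Psi$, so by the standard structure theory of reductive groups (see e.g.\ \cite{borel}) the subgroup $Q:=\langle T,\,U_\alpha\mid\alpha\in\Psi_{\ge0}\rangle$ is parabolic, with Levi decomposition $Q=R_u(Q)\rtimes M$ where $M:=\langle T,\,U_\alpha\mid\langle\alpha,\lambda\rangle=0\rangle=C_G(\lambda(k^*))$ and $R_u(Q)=\langle U_\alpha\mid\langle\alpha,\lambda\rangle>0\rangle$.

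Next I would identify $Q$ with $P_\lambda$. Using $\lambda(a)t\lambda(a)^{-1}=t$ for $t\in T$ and that conjugation by $\lambda(a)$ scales the coordinate of $u\in U_\alpha$ by $a^{\langle\alpha,\lambda\rangle}$, one computes directly that for $g=vm\in Q$ with $v\in R_u(Q)$ and $m\in M$ the conjugate $\lambda(a)v\lambda(a)^{-1}$ tends to $1$ while $\lambda(a)m\lambda(a)^{-1}=m$; hence $\lima\lambda(a)g\lambda(a)^{-1}$ exists and equals $m$, giving $Q\subseteq P_\lambda$ and $c_\lambda(vm)=m$. For the reverse inclusion $P_\lambda\subseteq Q$ one uses the open cell $R_u(Q^-)\times M\times R_u(Q)\hookrightarrow G$ (with $Q^-$ the parabolic attached to $-\lambda$) together with the Bruhat decomposition to see that for $g\notin Q$ the conjugate $\lambda(a)g\lambda(a)^{-1}$ acquires, in suitable coordinates, a nontrivial component in $R_u(Q^-)$ which blows up as $a\to0$, so the limit fails to exist. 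Thus $P_\lambda=Q$ is parabolic, $L_\lambda:=C_G(\lambda(k^*))=M$ is a Levi subgroup of it, and $R_u(P_\lambda)=\langle U_\alpha\mid\langle\alpha,\lambda\rangle>0\rangle$. As for $c_\lambda$: uniqueness of the extension $\hat\phi$ together with $\lambda(a)(gh)\lambda(a)^{-1}=(\lambda(a)g\lambda(a)^{-1})(\lambda(a)h\lambda(a)^{-1})$ shows $c_\lambda$ is a homomorphism, and reparametrising $a\mapsto ab$ shows $c_\lambda(g)$ is centralised by $\lambda(k^*)$, so $c_\lambda$ maps $P_\lambda$ into $L_\lambda$. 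Since $c_\lambda$ restricts to the identity on $L_\lambda$ and to the trivial map on $R_u(P_\lambda)$, under $P_\lambda=R_u(P_\lambda)\rtimes L_\lambda$ it is the projection onto $L_\lambda$, hence surjective with kernel $R_u(P_\lambda)$ and fixed-point set $L_\lambda$; it is a morphism of varieties (not merely a map) because by definition each $\hat\phi$ is a morphism, and these glue (cf.\ Remark~\ref{rem:mophisms_lim}).

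For the forward direction, given a parabolic $P$ with Levi subgroup $L$, put $S:=Z(L)^\circ$, a torus with $C_G(S)=L$, and pick a maximal torus $T$ of $L$ (hence of $G$) containing $S$. By the classification of parabolic subgroups containing a fixed Levi subgroup, these correspond bijectively to the open chambers in $Y(S)\otimes\mathbb{R}$ cut out by the hyperplanes $\langle\alpha,\cdot\rangle=0$ for roots $\alpha$ nontrivial on $S$; I would choose $\lambda\in Y(S)$ in the chamber corresponding to $P$, generic enough that also $C_G(\lambda(k^*))=C_G(S)=L$. Then $\langle\alpha,\lambda\rangle>0$ for precisely the roots of $R_u(P)$ and $\langle\alpha,\lambda\rangle=0$ for precisely the roots of $L$, so applying the converse direction to this $\lambda$ yields $P_\lambda=P$, $L_\lambda=C_G(\lambda(k^*))=L$, and the stated properties of $c_\lambda$.

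The main obstacle is the inclusion $P_\lambda\subseteq Q$: showing that the limit genuinely fails to exist outside $Q$ requires the honest geometric input of the open cell / Bruhat decomposition rather than a formal manipulation, and some care is needed to see that $c_\lambda$ is an algebraic morphism. In the forward direction the only non-formal point is that the chamber of $Y(S)$ realising the given $P$ is non-empty — equivalently, that $P$ admits a cocharacter description at all — which is exactly the classification of parabolic subgroups above a fixed Levi subgroup; everything else is bookkeeping with root-subgroup coordinates.
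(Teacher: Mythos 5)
The paper gives no argument for this lemma: it is quoted verbatim from \cite[Lem.~2.4]{BMR}, where it is itself assembled from standard structure theory of reductive groups (see e.g.\ \cite[\S 8.4]{spr2}). So the relevant question is whether your reconstruction of that standard proof is sound, and in outline it is: identifying $P_\lambda$ with $Q=\langle T, U_\alpha \mid \langle\alpha,\lambda\rangle\ge 0\rangle$, $L_\lambda$ with $C_G(\lambda(k^*))=\langle T,U_\alpha\mid\langle\alpha,\lambda\rangle=0\rangle$, and $c_\lambda$ with the projection along $P_\lambda=R_u(P_\lambda)\rtimes L_\lambda$ (whence morphism, surjectivity, kernel and fixed-point set) is exactly the right skeleton, as is the forward direction via $S=Z(L)^\circ$, $C_G(S)=L$ and a cocharacter of $S$ pairing strictly positively with the roots of $R_u(P)$ and trivially with those of $L$.

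Two caveats at the places you yourself flag. First, at the crux $P_\lambda\subseteq Q$, the mechanism you describe (``a nontrivial component in $R_u(Q^-)$ which blows up'') only makes literal sense for $g$ in the open cell $\Omega=R_u(Q^-)\,M\,R_u(Q)$; for $g$ in other Bruhat cells, e.g.\ $g=n_w$ with $w\lambda\ne\lambda$, one gets $\lambda(a)n_w\lambda(a)^{-1}=n_w\,\mu(a)$ with $\mu=w^{-1}\cdot\lambda-\lambda\ne 0$ and the degeneration is along a torus direction, so a cell-by-cell Bruhat analysis needs genuine care. The clean completion uses the reparametrisation trick you already deploy for $c_\lambda$: if $x=\lima\lambda(a)g\lambda(a)^{-1}$ exists it is centralised by $\lambda(k^*)$, hence $x\in M\subseteq\Omega$; since $\Omega$ is open and stable under conjugation by $\lambda(k^*)$, this forces $g\in\Omega$, and then the $R_u(Q^-)$-factor of $g$ must be trivial because $R_u(Q^-)$ is closed in $G$ and conjugation by $\lambda(a)$ scales its root coordinates by strictly negative powers of $a$. (Alternatively: $P_\lambda$ is a closed subgroup, being $G\cap P_\lambda(\GL(V))$ for a closed embedding $G\subseteq\GL(V)$ with $V$ decomposed into $\lambda$-weight spaces, where $P_\lambda(\GL(V))$ is the block upper triangular group; it contains the parabolic $Q$, hence is a parabolic containing $T$, hence is generated by $T$ and the root groups it contains, and $U_\alpha\subseteq P_\lambda$ if and only if $\langle\alpha,\lambda\rangle\ge0$ --- no Bruhat decomposition needed.) Second, in the forward direction you invoke the classification of parabolic subgroups with a fixed Levi subgroup, which is of essentially the same depth as the lemma itself; it is more economical to fix a Borel subgroup $B$ with $T\subseteq B\subseteq P$, write $P$ as the standard parabolic $P_I$, note that $L$ is the unique Levi subgroup of $P$ containing $T$, and take $\lambda$ to be a suitable positive integral multiple of the sum of the fundamental coweights attached to the simple roots outside $I$; then your converse direction applied to this $\lambda$ yields (i)--(iii).
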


\subsection{\texorpdfstring{$G$}{G}-complete reducibility, products and epimorphisms}
\label{sub:isogenies}

Let $f:G_1\rightarrow G_2$ be a homomorphism of algebraic groups.  
We say that $f$ is \emph{non-degenerate} provided 
$({\rm ker}f)^\circ$ is a torus, see \cite[Cor.\ 4.3]{serre2}.
In particular, $f$ is non-degenerate if $f$ is an isogeny.

\begin{lem}[{\cite[Lem.~2.12]{BMR}}]
	\label{lem:epimorphisms}
	Let $G_1$ and $G_2$ be reductive groups.
	\begin{enumerate}[label=\rm(\roman*)]
		\item
		Let $H$ be a 
		closed subgroup of $G_1\times G_2$.  
		Let $\pi_i:G_1\times G_2\rightarrow G_i$ 
		be the canonical projection for $i = 1,2$.  Then $H$ 
		is $(G_1\times G_2)$-cr  
		if and only if $\pi_i(H)$ is $G_i$-cr 
		for $i=1,2$.
		\item
		Let $f:G_1\rightarrow G_2$ be an epimorphism.  
		Let $H_1$ and $H_2$ be closed subgroups of $G_1$ and $G_2$, respectively.
		\begin{enumerate} 
			\item[(a)] 
			If $H_1$ is $G_1$-cr,  
			then $f(H_1)$ is $G_2$-cr. 
			\item[(b)] 
			If $f$ is non-degenerate, then $H_1$ is $G_1$-cr  
			if and only if $f(H_1)$ is $G_2$-cr, 
			and $H_2$ is $G_2$-cr  
			if and only if 
			$f^{-1}(H_2)$ is $G_1$-cr. 
		\end{enumerate}
	\end{enumerate}
\end{lem}

\subsection{Complete reducibility versus reductivity}

Any $G$-completely reducible subgroup $H$ of $G$ is reductive \cite[Prop.~4.1]{serre2}.  The converse is true in characteristic 0 --- in particular, the answer to both Questions~\ref{qn:ascent} and \ref{qn:descent} is yes in this case --- but is false in positive characteristic: for instance, a non-trivial finite unipotent subgroup of $G$ is reductive but can never be $G$-cr \cite[Prop.~4.1]{serre2}.  The situation is somewhat nicer for connected $H$: the converse is true if $p$ is sufficiently large.  To be precise, we have the following theorem due to Serre.

\begin{thm}[{\cite[Thm.\ 4.4]{serre2}}]
\label{thm:ag}
 Suppose $p \geq a(G)$ and $(H:H^\circ)$ is prime to $p$.  Then $H^\circ$ is reductive if and only if $H$ is $G$-completely reducible.
\end{thm}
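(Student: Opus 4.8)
The plan is to establish the two implications separately; the hypotheses $p \geq a(G)$ and $p \nmid (H:H^\circ)$ are needed only for the direction ``$H^\circ$ reductive $\Rightarrow$ $H$ is $G$-cr''. For the (unconditional) converse I would argue by contraposition using the Borel--Tits theorem: if $H^\circ$ is not reductive, then $U := R_u(H^\circ)$ is a non-trivial connected unipotent subgroup which, being characteristic in $H^\circ \trianglelefteq H$, is normal in $H$; Borel--Tits then provides a parabolic subgroup $P$ with $U \subseteq R_u(P)$ and $H \subseteq N_G(U) \subseteq P$. Were $H$ contained in a Levi subgroup $L$ of $P$ we would have $U \subseteq H \cap R_u(P) \subseteq L \cap R_u(P) = 1$, a contradiction; hence $H$ is not $G$-cr. (Alternatively, one may cite that a normal subgroup of a $G$-cr subgroup is $G$-cr, together with \cite[Prop.~4.1]{serre2}.)

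For the forward direction, let $P \supseteq H$ be parabolic; since the Levi subgroups of $P$ are precisely the $R_u(P)$-conjugates of a fixed Levi subgroup $L$, writing $V := R_u(P)$ and $P = L \ltimes V$, it suffices to prove the following, applied with $M = H$: \emph{if $p \geq a(G)$ and $M \leq P$ has $M^\circ$ reductive and $(M : M^\circ)$ prime to $p$, then $M$ is $V$-conjugate into $L$}. One first checks that $M \cap V = 1$, using that $M^\circ$ is reductive and $(M:M^\circ)$ is prime to $p$; thus the projection $P \to P/V = L$ maps $M$ isomorphically onto a subgroup $\overline M \leq L$ with $\overline M^\circ$ reductive and $(\overline M : \overline M^\circ)$ prime to $p$, and $M$ is a complement to $V$ in $\overline M \ltimes V \leq P$. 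Choosing a cocharacter $\lambda$ with $P = P_\lambda$ and $L = C_G(\lambda)$, the $\lambda$-grading of $\mathfrak g = \Lie(G)$ furnishes a filtration of $V$ by $P$-stable normal subgroups whose successive quotients $W_1, \dots, W_m$ are abelian and carry rational $L$-module structures, each a subquotient of the adjoint module $\mathfrak g$. A downward induction along this filtration --- equivalently, the standard d\'evissage of the pointed cohomology set $H^1(\overline M, V)$ classifying the complements to $V$ in $\overline M \ltimes V$ up to conjugacy --- reduces the assertion to showing $H^1(\overline M, W_j) = 0$ for every $j$.

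Since $(\overline M : \overline M^\circ)$ is prime to $p$, a transfer (restriction--corestriction) argument embeds $H^1(\overline M, W_j)$ into $H^1(\overline M^\circ, W_j)$, so it is enough to treat connected reductive groups. The bound $a(G)$ is chosen precisely so that every $L$-module subquotient $W_j$ of the adjoint module arising from a parabolic has highest weights small enough relative to $p$ to force $H^1(\overline M^\circ, W_j) = 0$ for \emph{every} connected reductive subgroup $\overline M^\circ \leq L$. I expect this final vanishing to be the crux of the proof: it is the only place the numerical hypothesis enters, and it rests on the known cohomological vanishing theorems for reductive algebraic groups acting on modules whose weights lie in a suitable range, rather than on any soft argument. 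Everything else is formal once this input is in place.
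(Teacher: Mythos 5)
You should first note that the paper does not prove this statement at all: Theorem \ref{thm:ag} is quoted verbatim from \cite[Thm.~4.4]{serre2}, so the only question is whether your argument stands on its own. The easy direction is fine: the Borel--Tits argument (or citing \cite[Prop.~4.1]{serre2} plus the fact that normal subgroups of $G$-cr groups are $G$-cr) is the standard proof that $G$-cr forces $H^\circ$ reductive. The framework you set up for the forward direction is also a legitimate and standard reduction: $H\cap R_u(P)=1$ (your sketch works, via $(H\cap R_u(P))^\circ\subseteq R_u(H^\circ)=1$, finiteness, and the prime-to-$p$ index), viewing $H$ as a complement to $V=R_u(P)$ in $\overline{M}\ltimes V$, filtering $V$ by the $\lambda$-weight subgroups with abelian $L$-module quotients $W_j$, d\'evissage of the nonabelian $H^1$, and killing the component group by inflation--restriction since $(\overline M:\overline M^\circ)$ is prime to $p$ and the $W_j$ are $k$-vector spaces.

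The genuine gap is the crux you defer: the claim that $p\geq a(G)$ forces $H^1(\overline M^\circ,W_j)=0$ for \emph{every} connected reductive subgroup $\overline M^\circ$ of a Levi $L$ and every level $W_j$ of $\Lie R_u(P)$. This is asserted, not proved, and the justification offered does not hold up. The weights occurring in the levels of $\Lie R_u(P)$ are governed by heights of roots, i.e.\ by the Coxeter number $h(G)$, not by $a(G)=\rk(G)+1$ (these agree only in type $A$; e.g.\ $a(E_8)=9$ while $h(E_8)=30$), and in any case what matters is the restriction of $W_j$ to an arbitrarily embedded reductive subgroup $\overline M^\circ$, whose weights with respect to its own maximal torus are not controlled by the hypothesis $p\geq a(G)$ in any evident way. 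There is no off-the-shelf vanishing theorem of this shape: statements of the form ``$H^1(X,W_j)=0$ for all reductive $X\leq L$ and all levels $W_j$'' are precisely the hard content that is established in the literature case-by-case (Liebeck--Seitz type analyses) under different and typically stronger hypotheses on $p$, and such blanket vanishing is strictly stronger than the theorem you are trying to prove, since a nonzero class on a graded piece need not survive the nonabelian d\'evissage to give a nonconjugate complement. So your proposal reduces Serre's theorem to an unproven, and at least as difficult, cohomological claim; all of the real work lies in that last step, and it cannot be waved through as ``known vanishing for weights in a suitable range.''
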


Here the invariant $a(G)$ of $G$ is defined as follows \cite[\S 5.2]{serre2}.
For $G$ simple, set $a(G) = \rk(G) +1$, 
where $\rk(G)$ is the rank of $G$.
For $G$ reductive, let $a(G) = \max(1, a(G_1), \ldots, a(G_r))$, 
where $G_1, \ldots, G_r$ are the simple components of $G$.  
In the special case $G= \GL(V)$ we have $a(G)= \dim(V)$, and a subgroup $H$ of $G$ is $G$-cr if and only if $V$ is a semisimple $H$-module.  We recover a basic result of Jantzen \cite[Prop.\ 3.2]{jantzen0}: if $\rho\colon H\to \GL(V)$ is a representation of a connected reductive group $H$ and $p\geq \dim(V)$ then $\rho$ is completely reducible.  The finite unipotent example above shows that we cannot expect Theorem~\ref{thm:ag} to carry over completely, even with extra restrictions on $p$.  Even when $H$ is connected, $H$ can fail to be $G$-cr if $p$ is small.

\begin{rem}
 Note that if we assume that $K$ is connected then Theorem~\ref{thm:main} follows immediately from Theorem~\ref{thm:ag}, since $d(G)\geq a(G)$.  We need a more elaborate proof with a worse bound on $p$, as we do not wish to place any restrictions on $(K:K^\circ)$; Theorem \ref{thm:main} is only of independent interest when $(K:K^\circ)$ is not prime to $p$.  For an application in such an instance, see Corollary~\ref{cor:finite}.
\end{rem}

\subsection{Coxeter numbers}
\label{sec:coxeter}

The invariant $h(G)$ denotes the maximum of the \emph{Coxeter numbers} of the simple components of $G$ \cite[(5.1)]{serre2}, and we define $\widetilde{h}(G)$ to be $h(G)$ if $[G,G]$ is simply connected and $h(G)+ 1$ otherwise.  
Recall that if $G$ is simple, we have $h(G) + 1= \dim(G) / \rk(G)$; the values of $h(G)$ for the
various Dynkin types are as follows:

\begin{center}
	\begin{tabular}{c|*{8}{c}}
		$G$ & $A_n \ (n\ge1)$ & $B_n , C_n \ (n\ge2)$ & $D_n \ (n\ge4)$ & $E_6$ & $E_7$ & $E_8$ & $F_4$ & $G_2$ \\ \hline
		$h(G)$ & $n+1$ & $2n$ & $2n-2$ & $12$ & $18$ & $30$ & $12$ & $6$
	\end{tabular}
\end{center}

We thus have
\begin{equation}
\label{eqn:inequalities}
 a(G) \le h(G) \le d(G)
\end{equation}
for any $G$.  Note, however, that we can have $\widetilde{h}(G)> d(G)$: for instance, take $G= \PGL_n$.

\section{\texorpdfstring{$G$}{G}-complete reducibility, separability and reductive pairs}
\label{sec:redpair}

Now we consider the 
interaction of subgroups of $G$ with the Lie algebra $\Lie G = \gg$ of $G$.
Much of this material is taken from \cite{BMR}.

\begin{defn}[{\cite[Def.~3.27]{BMR}}]
	\label{def:separable} 
	For a closed subgroup $H$ of $G$, the Lie subalgebra $\Lie(C_G(H))$ is contained in  $\cc_\gg(H)$, the fixed-point space of $H$ on $\gg$ in the adjoint action. 
	In case of equality (that is, if the scheme-theoretic centralizer of $H$ in $G$ is smooth), we say that $H$ is \emph{separable in $G$}; else  $H$ is \emph{non-separable in $G$}.  (See \cite[Rem.~3.32]{BMR} for an explanation of the terminology.)
\end{defn}

Of central importance is the following observation.

\begin{exmp}[{\cite[Ex.~3.28]{BMR}}]
	\label{exmp:allsep}
	Any closed subgroup $H$ of $G=\GL(V)$ is separable in $G$.  
\end{exmp}

\begin{defn}
	\label{def:redpair} 
	Following Richardson \cite{rich2}, we call 
	$(G,H)$ a \emph{reductive pair} provided that $H$ is a reductive subgroup of 
	$G$ and under the adjoint action, $\Lie G$ decomposes as a direct sum
	\[
	\Lie G = \Lie H \oplus \mm, 
	\]
	for some $H$-submodule $\mm$.
\end{defn}

For a list of examples of reductive pairs we refer 
to P.\ Slodowy's article  \cite[I.3]{slodowy}.  
For further examples, see \cite[Ex.~3.33, Rem.~3.34]{BMR}.

Our application of reductive pairs to $G$-complete reducibility goes via the following result.

\begin{prop}[{\cite[Cor.~3.36]{BMR}}]
	\label{prop:red-pair-gl}
	Suppose that $(\GL(V),H)$ is a reductive pair
	and  $K$ is a closed subgroup of  $H$. 
	If  $V$ is a semisimple $K$-module, then
	$K$ is $H$-completely reducible.
\end{prop}

We look first at the special case of the adjoint representation. 

\begin{exmp}[{\cite[Ex.~3.37]{BMR}}]
	\label{ex:adjoint}
	Let $H$ be a simple group of adjoint type and let 
	$G = \GL(\Lie H)$.
	We have a symmetric non-degenerate Ad-invariant bilinear form 
	on $\Lie G \cong \End (\Lie H)$ 
	given by the usual trace form 
	and its restriction to $\Lie H$ is just the Killing form of $\Lie H$.
	Since $H$ is adjoint and $\Ad$ is a closed embedding, 
	$\ad : \Lie H \to \Lie \Ad(H)$ is surjective. 
	Thus it follows from the arguments in \cite[Rem.~3.34]{BMR} 
	that if the Killing form of $\Lie H$ 
	is non-degenerate, then $(G,H)$ is a reductive pair.
	
	Suppose first that $H$ is a simple classical group of adjoint type
	and $p > 2$.
	The Killing form 
	is non-degenerate for ${\mathfrak {sl}}(V)$, ${\mathfrak {so}}(V)$, or
	${\mathfrak {sp}}(V)$  if and only if $p$  does not
	divide  $2\dim V$, $\dim V-2$, or $\dim V+2$, respectively,
	cf.\ \cite[Ex.~Ch.~VIII, \S 13.12]{bou2}.  
	In particular, for  $H$ adjoint of type 
	$A_n$, $B_n$, $C_n$, or $D_n$, the Killing form 
	is non-degenerate if $p > 2$  and $p$ does not
	divide $n+1$, $2n-1$, $n+1$, or $n-1$, respectively. 
	
	Now suppose that $H$ is a simple exceptional group of adjoint type. 
	If $p$ is good for $H$, then
	the Killing form of $\Lie H$ is non-degenerate; this 
	was noted by Richardson (see \cite[\S 5]{rich2}).
	Thus if $p$ satisfies the appropriate condition, then 
	$(\GL(\Lie H),H)$ is a reductive pair and Proposition~\ref{prop:red-pair-gl}
	applies.
\end{exmp}

The next result is new, and gives a more general characterisation of reductive pairs.

\begin{thm}
	\label{thm:redpairs}
For a simply connected simple algebraic group $G$ in characteristic $p \ge 0$, consider the following conditions:
\begin{enumerate}[label=\rm(\roman*)]
	\item $(\GL(V),\rho(G))$ is a reductive pair, where $\rho : G \to \GL(V)$ is a non-trivial representation of least dimension; \label{minred}
	\item $(\GL(V),\rho(G))$ is a reductive pair, for some non-trivial irreducible representation $\rho : G \to \GL(V)$; \label{existred}
	\item $p$ is very good for $G$; \label{pvg}
	\item $(\GL(\Lie(G)),\Ad(G))$ is a reductive pair; \label{adred}
	\item the Killing form on $\Lie(G)$ is non-degenerate; \label{nondeg}
	\item $p$ is very good for $G$ and, if $G$ has classical type, then $p \nmid e(G)$ as follows: \label{supergood}
	\begin{center}
		\begin{tabular}{c|*{4}{c}}
			$G$ & $A_n$ & $B_n$ & $C_n$ & $D_n$ \\ \hline
			$e(G)$ & $2$ & $2n-1$ & $n+1$ & $n-1$
		\end{tabular}
	\end{center}
\end{enumerate}
Then \ref{minred} $\Leftrightarrow$ \ref{existred} $\Leftrightarrow$ \ref{pvg} $\Leftarrow$ \ref{adred} $\Leftarrow$ \ref{nondeg} $\Leftrightarrow $\ref{supergood}. 

For $G$ of exceptional type, all these conditions are equivalent.
\end{thm}

\begin{proof}
	It is clear that \ref{minred} implies \ref{existred}. If \ref{existred} holds, then every subgroup of $\rho(G)$ is separable, since every subgroup of $\GL(V)$ is separable, by Example \ref{exmp:allsep},  and this descends through reductive pairs. This means that $p$ is pretty good for $\rho(G)$, see \cite[Def.\ 2.11]{herpel}, which is the same as $p$ being very good for $\rho(G)$ since $G$ is simple. Note that $p$ being very good is insensitive to the isogeny type of $G$, so \ref{existred} implies \ref{pvg}.
	
	Next, the implication \ref{pvg} $\Rightarrow$ \ref{minred} for $G$ of type $B_n$, $C_n$ and $D_n$ is \cite[Lem.~5.1]{rich2}. For $\SL(V)$, it is well-known that the traceless matrices $\sl(V)$ and the scalar matrices $\cc(\gl(V))$ are the only proper, nonzero $\GL(V)$-submodules (and $\SL(V)$-submodules) of $\gl(V)$.
	Now \ref{pvg} means that $p$ is coprime to $\dim V$, which implies that these submodules intersect trivially, so that $\sl(V)$ is complemented by $\cc(\gl(V))$. So \ref{minred} holds in type $A_n$.
	
	It remains to consider the exceptional cases $(G, V) = (G_2, V_7)$, $(F_4, V_{26})$, $(E_6, V_{27})$, $(E_7, V_{56})$ and $(E_8, V_{248})$ where $V_j$ is a minimal-dimensional $G$-module in each case. Now $\Lie(\GL(V)) = V_{j} \otimes V_{j}^{\ast}$ contains $\Lie(G)$ as a submodule, which is irreducible since \ref{pvg} holds.
	The known weights of low-dimensional irreducible $G$-modules, given for instance in \cite{luebeck}, now allow one to calculate the weights of $V_j \otimes V_j^{\ast}$ and determine its $G$-composition factors.
	In each case, it transpires that $\Lie(G)$ is in fact the unique $G$-composition factor with a particular highest weight. Since $\Lie(G)$ is a submodule and $V_j \otimes V_j^{\ast}$ is self-dual, $\Lie(G)$ also occurs as a quotient of this, and the kernel of this quotient map is then a complement to $\Lie(G)$, so that \ref{minred} indeed holds.
	We illustrate the details in case $G$ is of type $G_2$ in Remark \ref{rem:redpair}(iv).
		
	This shows that \ref{minred}, \ref{existred} and \ref{pvg} are equivalent. Now it is clear that \ref{adred} $\Rightarrow$ \ref{existred}. The equivalence of \ref{nondeg} and \ref{supergood} 
	follows from Example \ref{ex:adjoint}. 
If \ref{nondeg} holds then the Killing form on $\Lie(G)$ is, up to a non-zero scalar, the restriction of the trace form on $\Lie(\GL(\Lie(G)))$, and hence $\Lie(G)$ has an orthogonal complement, so that \ref{adred} holds. Also, \ref{pvg} coincides with \ref{supergood} when $G$ has exceptional type, which shows that all the conditions are equivalent in this case.
\end{proof}

\begin{rems}
	\label{rem:redpair} 
	(i). For exceptional groups in good characteristic, one can also check, just as in the case of the minimal module, that $\Lie(G)$ is the unique $G$-composition factor of $\Lie(G) \otimes \Lie(G)^{\ast}$ having a particular high weight, so that (being a submodule) it is a direct summand; this gives an alternative direct proof that \ref{pvg} $\Rightarrow$ \ref{adred} in Theorem \ref{thm:redpairs} for exceptional $G$.
		
	(ii). For type $A_n$ when $p = 2 \nmid n+1$, so that $\Lie(G)$ is simple and self-dual but the Killing form vanishes, evidence suggests that $(\GL(\Lie(G)),G)$ is nevertheless a 
	reductive pair. For instance, if $n=2$ or $4$ and $G = \SL_{n+1}$, the module $V = \Lie(G)$ appears with multiplicity $2$ as a direct summand of $\gl(V) = V \otimes V^{\ast}$; in fact we have $\gl(V) \cong V \oplus V \oplus W$ for some indecomposable $G$-module $W$. With respect to the trace form on $\gl(V)$, the image of the embedding $\Lie(G) \to \gl(V)$ gives a totally isotropic subspace isomorphic to $V$, and the natural isomorphism $\gl(V)/V^{\perp} \to V^{\ast}$ then shows the existence of a second composition factor isomorphic to $V$. One can then use the self-duality of $V$ and $\gl(V)$ to deduce that $\Lie(G)$ and this second composition factor are each a direct summand of $\gl(V)$.
		
	(iii). For types $B_n$, $C_n$, $D_n$ with $p$ odd but dividing $2n-1$, $n+1$, $n-1$ respectively, considering such instances in rank up to $6$ suggests that $(\GL(\Lie(G)),G)$ is never a reductive pair. 
	In each case, it transpires that $\Hom(\Lie(G),\gl(\Lie(G))$ is $1$-dimensional. Thus $\gl(\Lie(G))$ has a unique $G$-submodule isomorphic to $\Lie(G)$, which turns out to lie in a self-dual indecomposable $G$-module direct summand of $\gl(\Lie(G))$ also having $\Lie(G)$ as its head.
	
	(iv). To illustrate the argument in the proof above, when $G$ has type $G_2$ and $p \neq 2,3$ the $G$-module $V_{7}$ is irreducible of highest weight $\lambda_2$, and the $49$-dimensional module $V_{7} \otimes V_{7}^{\ast}$ has high weights $0$, $\lambda_1$, $\lambda_2$ and $2\lambda_2$ when $p \neq 7$; or $0$, $0$, $\lambda_1$, $\lambda_2$ and $2\lambda_2$ when $p = 7$. In either case, we find a unique composition factor of high weight $\lambda_1$, which is $\Lie(G)$.
\end{rems}

\section{Proofs of Theorem~\ref{thm:main} and Corollary~\ref{cor:finite}}
\label{sec:proofs}

\begin{proof}[Proof of Theorem~\ref{thm:main}]
	Let $\pi\colon G\to G/Z(G)^\circ$ be the canonical projection.  Owing to  
	Lemma \ref{lem:epimorphisms}(ii)(b), 
	we can replace $G$ with $G/Z(G)^\circ$, so without loss we can assume that $G$ is semisimple.  Let $G_1,\ldots, G_r$ be the simple factors of $G$.  Multiplication gives an isogeny from $G_1\times\cdots \times G_r$ to $G$.  Again by Lemma \ref{lem:epimorphisms}(ii)(b), 
	we can replace $G$ with $G_1\times\cdots \times G_r$, so we can assume $G$ is the product of its simple factors.  By  
	Lemma \ref{lem:epimorphisms}(i)
	it is thus enough to prove the result when $G$ is simple and simply connected.
	Of course, as well as replacing $G$ with its (pre-)image under an isogeny, we also replace $H$ and $K$ with their (pre-)images under that isogeny along the way.
	
	First suppose $G$ is of type $A$.
	Then $K\subseteq H\subseteq \SL(V)\subseteq \GL(V)$.  By \cite[Thm.\ 1.3]{BHMR}, if $K$ is $H$-cr then $K$ is $\GL(V)$-cr, so $K$ is $\SL(V)$-cr and we’re done.
	Next suppose $G$ is not of type $A$.
	Then, as $p \ge d(G)$, $p$ is good for $G$. It follows from Theorem \ref{thm:redpairs} that $(\GL(V),G)$ is a reductive pair, where $V$ is an irreducible $G$-module of least dimension.
Since $p \ge d(G) = \dim V$,  $V$ is semisimple for $K$, thanks to \cite[Thm.\ 1.3]{BHMR}, and thus 
$K$ is $G$-cr, by Proposition~\ref{prop:red-pair-gl}.
\end{proof}

The following is an immediate consequence of 
\cite[III 1.19(a)]{springersteinberg}
and \cite[Thm.~1.3]{BMR:regular}.

\begin{lem}
	\label{lem:GsigmaGcr}
	Let $\sigma$ be a Steinberg endomorphism of $G$.
	Then $G_\sigma$ is $G$-irreducible.
\end{lem}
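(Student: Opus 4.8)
The statement asserts that if $\sigma$ is a Steinberg endomorphism of the connected reductive group $G$, then the finite group of Lie type $G_\sigma$ is $G$-irreducible, i.e.\ it lies in no proper parabolic subgroup of $G$. The plan is to argue by contradiction: suppose $G_\sigma \subseteq P$ for some proper parabolic $P$ of $G$. The key observation is that the set of parabolic subgroups of a given type is a projective variety on which $G$ acts transitively, and $\sigma$ induces a morphism of this variety; by a Lang–Steinberg style fixed-point argument (this is precisely the content of \cite[III~1.19(a)]{springersteinberg}, applied to the variety $G/P$), there exists a $\sigma$-stable parabolic subgroup $Q$ of $G$ that is conjugate to $P$, and hence also proper. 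Indeed, since $G$ is connected and acts transitively on the variety $G/P$ with $\sigma$ acting compatibly, the Lang–Steinberg theorem guarantees a $\sigma$-fixed point, which is exactly a $\sigma$-stable parabolic in the conjugacy class of $P$.

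Once we have a proper $\sigma$-stable parabolic $Q$, the group $G_\sigma$ need not a priori be contained in $Q$, so the next step is to replace the hypothesis ``$G_\sigma \subseteq P$'' with genuine information. This is where \cite[Thm.~1.3]{BMR:regular} enters: that result (on the $G$-complete reducibility and $G$-irreducibility of subgroups of the form $G_\sigma$, or more precisely, the statement that $G_\sigma$ is ``strongly reductive'' / $G$-ir because its centralizer-type structure forces it to meet every parabolic in a controlled way) should be invoked to rule out containment in the proper $\sigma$-stable parabolic $Q$. Concretely, a $\sigma$-stable proper parabolic $Q$ has a $\sigma$-stable Levi decomposition $Q = L \ltimes R_u(Q)$ with $R_u(Q) \neq 1$; then $(R_u(Q))_\sigma$ is a non-trivial normal subgroup of $Q_\sigma$, and analysing how $G_\sigma$ interacts with this unipotent radical — using that $G_\sigma$ is generated by its unipotent elements in the relevant (non-torus) cases and that these generators cannot all be absorbed into a proper parabolic — yields the contradiction. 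The cleanest route is simply to cite \cite[Thm.~1.3]{BMR:regular} for the assertion that $G_\sigma$ is not contained in any $\sigma$-stable proper parabolic, and combine it with the first paragraph.

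Putting the two ingredients together: if $G_\sigma$ were contained in a proper parabolic $P$, then by the Lang–Steinberg argument of \cite[III~1.19(a)]{springersteinberg} there would be a proper $\sigma$-stable parabolic $Q$ containing a conjugate of $G_\sigma$ — and since $G_\sigma$ is $\sigma$-stable we may arrange $G_\sigma \subseteq Q$ with $Q$ proper and $\sigma$-stable — contradicting \cite[Thm.~1.3]{BMR:regular}. Hence $G_\sigma$ lies in no proper parabolic, i.e.\ $G_\sigma$ is $G$-irreducible.

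I expect the main subtlety to be the bookkeeping in the first step: one must be careful that the parabolic produced by Lang–Steinberg is \emph{proper} (this is immediate, since it is $G$-conjugate to $P$ which is proper) and genuinely \emph{contains $G_\sigma$} rather than merely a conjugate — but since $G_\sigma$ is itself $\sigma$-stable, one can transport the containment along a suitable element of $G_\sigma \backslash G$ or appeal directly to the formulation in \cite[III~1.19(a)]{springersteinberg}, which already packages this. Since both cited results do the heavy lifting, the proof is genuinely short; the content is entirely in correctly assembling these two black boxes, which is presumably why the authors flag it as ``an immediate consequence.''
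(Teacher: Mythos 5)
Your reconstruction has a genuine gap precisely at the one step you do not delegate to a citation. From $G_\sigma\subseteq P$ and a $\sigma$-stable parabolic $Q=gPg^{-1}$ obtained from a fixed point of $\sigma$ on $G/P$, you cannot conclude that $G_\sigma\subseteq Q$: conjugation by $g$ carries the containment $G_\sigma\subseteq P$ to $gG_\sigma g^{-1}\subseteq Q$, and $gG_\sigma g^{-1}$ is in general not $G_\sigma$, nor is it a subgroup you control. The $\sigma$-fixed points of $G/P$ have no reason to correspond to parabolics containing $G_\sigma$; the locus of parabolics of the given type that contain $G_\sigma$ is a closed but non-homogeneous subvariety of $G/P$, so the Lang--Steinberg theorem does not produce a $\sigma$-fixed point in it, and your proposed fix (``transport along a suitable element of $G_\sigma\backslash G$'', or ``appeal directly'' to the reference) is exactly the missing argument rather than a proof of it. Note that once one does have $G_\sigma$ inside a proper $\sigma$-stable parabolic $Q$, the contradiction is elementary ($G_\sigma\subseteq Q$ forces $G_\sigma=Q_\sigma$, while $(G/Q)_\sigma$ has more than one point), so essentially the whole content of the lemma is concentrated in the step you leave open.

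There is also a mismatch with the two results the paper actually invokes. The second reference, \cite[Thm.~1.3]{BMR:regular}, is a theorem about closed subgroups of $G$ containing a \emph{regular unipotent element} of $G$ (as the title of that paper indicates); the natural reading of the paper's one-line proof is that \cite[III 1.19(a)]{springersteinberg} is a statement about $G_\sigma$ itself (placing a regular unipotent element of $G$ inside $G_\sigma$), after which the $G$-irreducibility criterion of \cite[Thm.~1.3]{BMR:regular} applies directly -- no parabolic-by-parabolic contradiction and no $\sigma$-stable parabolic is needed. You instead read the first citation as a fixed-point statement for $G/P$ and the second as ``$G_\sigma$ lies in no $\sigma$-stable proper parabolic'' (or even as ``$G_\sigma$ is $G$-ir'' outright, in which case there would be nothing left to assemble); regular unipotent elements, the actual engine of that theorem, never enter your argument in a usable way. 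Relatedly, the heuristic that $G_\sigma$ is ``generated by its unipotent elements and these cannot all be absorbed into a proper parabolic'' is not an argument: a proper parabolic contains a Borel subgroup, hence a full Sylow $p$-subgroup $U_\sigma$ of $G_\sigma$ and a great many unipotent elements of $G_\sigma$, so absorbing unipotent generators into parabolic subgroups is not in itself an obstruction.
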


\begin{proof}[Proof of Corollary~\ref{cor:finite}]
 By Lemma \ref{lem:GsigmaGcr}, $H_\sigma$ is $H$-ir.  The result now follows from Theorem~\ref{thm:main}.
\end{proof}

Note that Corollary \ref{cor:finite} is false without the bound on $p$. See Example \ref{ex:notHsigmacr} for an instance 
when $H$ is $G$-cr but $H_\sigma$ is not, where 
$p = 3 < 8 = d(G)$.

Our next result gives a particular set of conditions on $H_\sigma$ to guarantee that $H_\sigma$ and  $H$ 
belong to the same parabolic subgroups and the same Levi subgroups of $G$.
Note that, if  $\sigma\colon H\to H$ is a Steinberg endomorphism of $H$, then $\sigma$ stabilises a maximal torus of $H$, \cite[Cor.~10.10]{steinberg:end}. 
Also, for $S$ a torus in $G$, we have $C_G(S) = C_G(s)$ for some  $s \in S$, see \cite[III Prop.~8.18]{borel}.

\begin{prop}
	\label{prop:HcrvsHsigmacr}
	Let $H \subseteq G$ be connected reductive groups. 
	Let $\sigma\colon G\to G$ be a Steinberg endomorphism that stabilises $H$  and a maximal  torus $T$  of $H$. Suppose
	\begin{enumerate}[label=\rm(\roman*)]
		\item   $C_G(T) = C_G(t)$, for some $t \in T_\sigma$,   and
		\item  $H_\sigma$ meets every $T$-root subgroup of $H$ non-trivially.  
	\end{enumerate}
	Then $H_\sigma$ and $H$ are contained in precisely the same parabolic subgroups of $G$, and the same Levi factors thereof. In particular, $H$ is $G$-completely reducible if and only if $H_\sigma$ is $G$-completely reducible; similarly, $H$ is $G$-irreducible if and only if $H_\sigma$ is $G$-irreducible.
\end{prop}

\begin{proof}
	First assume $H_\sigma\subseteq P$ for some parabolic subgroup $P$ of $G$. Then $t$ lies in some maximal torus of $P$. So we can find a $\lambda \in Y(G)$ such that $P= P_\lambda$ and $\lambda$ centralizes $t$. But then $\lambda$ centralizes $T$, by (i), so $T\subseteq P$. Now we have a maximal torus $T$ of $H$ and a non-trivial part of each $T$-root group of $H$ inside $P$, by (ii), so we can conclude that all of $H$ belongs to  $P$.  Similarly, if $H_\sigma\subseteq L_\lambda$ for some $\lambda \in Y(G)$, we get $H\subseteq L_\lambda$.  The reverse conclusions are obvious, since $H_\sigma\subseteq H$. 
\end{proof}

In the presence of the conditions in  
Proposition \ref{prop:HcrvsHsigmacr}
we can improve the bound in Corollary \ref{cor:finite} considerably;
the following is immediate from Theorem \ref{thm:ag} and Proposition \ref{prop:HcrvsHsigmacr}.  

\begin{cor}	
	\label{cor:HcrvsHsigmacr}
Suppose $G, H$ and $\sigma$ satisfy the hypotheses of Proposition \ref{prop:HcrvsHsigmacr}. Suppose in addition that $p\geq a(G)$. 
	Then $H_\sigma$ is $G$-completely reducible.
\end{cor}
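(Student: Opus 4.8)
The plan is to deduce this formally from the two cited results, with no extra work. First I would apply Theorem~\ref{thm:ag} with the subgroup ``$H$'' of that statement taken to be our group $H$ itself. Since $H$ is connected we have $H^\circ = H$, so the index $(H:H^\circ) = 1$ is trivially prime to $p$, and $H^\circ = H$ is reductive by hypothesis. As $p \ge a(G)$, Theorem~\ref{thm:ag} then gives that $H$ is $G$-completely reducible.

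Next I would invoke the last two sentences of Proposition~\ref{prop:HcrvsHsigmacr}: under hypotheses (i) and (ii), together with the standing assumptions that $\sigma$ stabilises $H$ and the maximal torus $T$ of $H$, the subgroups $H$ and $H_\sigma$ lie in exactly the same parabolic and the same Levi subgroups of $G$, and in particular $H$ is $G$-cr if and only if $H_\sigma$ is $G$-cr. Combining this with the previous step, $H_\sigma$ is $G$-completely reducible, which is the assertion.

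There is essentially no obstacle in the corollary itself; all of the content sits in Proposition~\ref{prop:HcrvsHsigmacr} (and in Theorem~\ref{thm:ag}, which is quoted). The only point requiring a moment's care is to confirm that the hypotheses of Corollary~\ref{cor:HcrvsHsigmacr} feed verbatim into both ingredients: Theorem~\ref{thm:ag} needs only $p \ge a(G)$ plus the reductivity-and-index condition, both of which hold automatically for the connected reductive group $H$; and the hypotheses of Proposition~\ref{prop:HcrvsHsigmacr} are assumed wholesale. Hence the proof is a single line once those two results are in place.
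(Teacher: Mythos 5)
Your proposal is correct and matches the paper's intended argument exactly: since $H$ is connected reductive and $p\geq a(G)$, Theorem~\ref{thm:ag} gives that $H$ is $G$-cr, and then the equivalence in Proposition~\ref{prop:HcrvsHsigmacr} transfers this to $H_\sigma$. This is precisely the one-line deduction the paper describes as ``immediate'' from those two results.
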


Note that condition (ii) 
in Proposition \ref{prop:HcrvsHsigmacr} 
is automatically satisfied provided $\sigma$ induces a standard Frobenius endomorphism on $H$.
In that case Example \ref{ex:Tsigma} below demonstrates that condition (i)  above 
does hold generically. 
Nevertheless, Example \ref{ex:notHsigmacr} shows that
Proposition \ref{prop:HcrvsHsigmacr}
is false in general without condition (i) even when part (ii) is fulfilled.

The following example shows that the conditions in 
Proposition \ref{prop:HcrvsHsigmacr} do hold generically. 

\begin{ex}
	\label{ex:Tsigma}
	Let $\sigma_q \colon \GL(V) \to \GL(V)$ be a standard Frobenius endomorphism that stabilises the connected reductive subgroup $H$ of $\GL(V)$  and a maximal  torus $T$  of $H$.  
	Pick $l \in \BBN$ so that firstly all the different $T$-weights of $V$ are still distinct when restricted to $T_{\sigma_q^l}$ and secondly that there is a $t \in T_{\sigma_q^l}$, such that $C_{\GL(V)}(T) = C_{\GL(V)}(t)$.
	Then for every $n \ge l$, both conditions in Proposition \ref{prop:HcrvsHsigmacr} are satisfied for $\sigma = \sigma_q^n$. Thus there are only finitely many powers of $\sigma_q$ for which part (i) can fail.
	The argument here readily generalises to a Steinberg endomorphism of  a connected reductive $G$ which induces a generalised Frobenius morphism on $H$.
\end{ex}

In contrast to the setting in Example \ref{ex:Tsigma}, 
our next example demonstrates that  
the conclusion of Proposition \ref{prop:HcrvsHsigmacr} may fail, if 
condition (i) is not satisfied.
Consequently, the  conditions in  
Theorem~\ref{thm:main} and 
Corollary \ref{cor:finite} are needed in general.

\begin{ex}
	\label{ex:notHsigmacr}
	Let $p=3$, $q=9$ and $H = \SL_2$.  
	By Steinberg's tensor product theorem, the simple $H$-module $V = L(1+q+q^2)$  
	is isomorphic to $L(1)\otimes L(1)^{[2]}\otimes L(1)^{[4]}$, the superscripts denoting $p$-power twists.
	Thus, after identifying $H$ with its image in $G = \GL(V)$, we see that $H$ is $G$-cr. Let $\sigma = \sigma_q$ be the standard Frobenius on $G$. Then  $H_\sigma = \SL_2(9)$ is $H$-cr, by Lemma \ref{lem:GsigmaGcr}. 
	Now as an $H_\sigma$-module, $V$ is isomorphic to the $H$-module
	$L(1)\otimes L(1) \otimes L(1)$ which 
	admits the non-simple indecomposable Weyl module of highest weight $3$ 
	as a constituent. As the latter is not semisimple for $H_\sigma$,
	$V$ is not semisimple as an $H_\sigma$-module and so $H_\sigma$ is not $G$-cr. 
\end{ex}

\section{Saturation}
\label{sec:sat}

Let $u \in \GL(V)$ be unipotent of order $p$. Then there is a nilpotent element $\epsilon \in \End(V)$
with $\epsilon^p = 0$ such that $ u = 1 + \epsilon$. 
For $t \in \mathbb G_a$ we define $u^t$ by
\begin{equation}
	\label{eq:ut}
	u^t := (1 + \epsilon)^t = 1 + t\epsilon + \binom{t}{2} \epsilon^2 + \cdots + \binom{t}{p-1} \epsilon^{p-1},
\end{equation}	
see \cite{serre1}.
Then $\{u^t \mid t \in  {\mathbb G}_a\}$ is a closed connected subgroup  of $\GL(V)$
isomorphic to ${\mathbb G}_a$.

Following \cite{nori} and \cite{serre1}, a subgroup $H$ of $\GL(V)$ is \emph{saturated}  provided $H$ is closed and for any unipotent element $u$ of $H$ of order $p$ and any $t \in \BBG_a$ also $u^t$ given by \eqref{eq:ut} belongs to $H$.
The \emph{saturated closure}
$H^\sat$ of $H$ is the smallest saturated subgroup of $\GL(V)$ containing $H$.

We now recall a notion of saturation for 
arbitrary connected reductive groups which generalises the one just given for $\GL(V)$.
Suppose that $p \ge h(G)$.  
Then every unipotent element of $G$ has order $p$, see \cite{testerman}.
Let $u$ be a unipotent element of $G$. Then for $t \in \BBG_a$ there is a canonical ``$t^{\rm th}$ power'' $u^t$ of $u$ such that the map $t \mapsto u^t$ defines a homomorphism of the additive group $\BBG_a$ into $G$. 
We recall 
some results from \cite{serre1} and \cite[\S 5]{serre2}.

 Let $\UU$ be the subvariety of $G$ consisting of all unipotent elements of $G$ and let $\NN$ be the subvariety of $\Lie(G)$ consisting of all nilpotent elements of $\Lie(G)$.
 Fix a maximal torus $T$ of $G$, a Borel subgroup $B$ of $G$ containing $T$ and let $U$ be the unipotent radical of $B$.
 Since $p \ge h(G)$ and because the nilpotency class of $\Lie(U)$ is at most $h(G)$, 
 we can view $\Lie(U)$  
 as an algebraic group with multiplication given by the Baker-Campbell-Hausdorff formula (see \cite[Ch.~II \S 6]{bou}).
     
 Let $\Phi = \Phi(G,T)$ be the root system of $G$ with respect to $T$. For $\alpha$ a root in $\Phi$, let $x_\alpha : \BBG_a \to U_\alpha$ be a parametrization of the root subgroup $U_\alpha$ of $G$. Let $X_\alpha := \frac{d}{ds}(x_\alpha(s))|_{s=0}$ be a canonical generator of $\Lie(U_\alpha)$.
 Further, by $\Aut(G)$ we denote the group of algebraic automorphisms of $G$.
 We begin with the following result due to Serre; for a detailed proof, see \cite[\S 6]{BDP}.  Recall that we define $\widetilde{h}(G)$ to be $h(G)$ if $[G,G]$ is simply connected and $h(G)+1$ otherwise.
 
\begin{thm}[{\cite[Thm.~3]{serre1}}]
 	\label{thm:log}
 		Let $p \ge \widetilde{h}(G)$. 
 		There is a unique isomorphism of varieties $\log : \UU \to \NN$ such that the following hold:
\begin{enumerate}[label=\rm(\roman*)]
 		\item  $\log (\sigma u) = d \sigma (\log u)$ for any $\sigma \in \Aut(G)$ and any $u \in \UU$;
 		\item  the restriction of $\log$ to $U$ defines an isomorphism of algebraic groups
 		$U \to \Lie(U)$ whose tangent map is the identity on $\Lie(U)$;
 		\item  $\log(x_\alpha(t)) = t X_\alpha$ for any $\alpha \in \Phi$ and any $t \in \BBG_a$. 
 	\end{enumerate}
\end{thm}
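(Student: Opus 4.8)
The plan is to construct $\log$ first on the unipotent radical $U$ of the fixed Borel $B$ via the Campbell--Hausdorff formula, then to propagate it to all of $\UU$ using $\Aut(G)$-equivariance, and finally to upgrade the resulting bijection to an isomorphism of varieties.

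\emph{The local model and uniqueness.} Since $p$ exceeds the nilpotency class of $\mathfrak{u} := \Lie(U)$, the Campbell--Hausdorff series endows $\mathfrak{u}$ with the group structure used in the statement, and the identity map of $\mathfrak{u}$ read in exponential coordinates gives an isomorphism of algebraic groups $\exp_U\colon \mathfrak{u} \to U$ whose differential is the identity; I set $\log_U := \exp_U^{-1}$. Choosing the parametrisations $x_\alpha$ compatibly with a Chevalley system, one has $\log_U(x_\alpha(t)) = tX_\alpha$ for $\alpha \in \Phi^+$, so $\log_U$ already satisfies (ii) and (iii) on $U$. For uniqueness: any $\log$ satisfying (ii) restricts to a homomorphism $U \to \mathfrak{u}$ which by (iii) is prescribed on each positive root subgroup, and these generate $U$, so $\log|_U = \log_U$ is forced; then (i) for inner automorphisms, together with the fact that every unipotent element of $G$ is $G$-conjugate into $U$ (it lies in the unipotent radical of some Borel), forces the value of $\log$ on all of $\UU$.

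\emph{Existence.} I would define $\log(u) := \Ad(g)\log_U(g^{-1}ug)$ whenever $g^{-1}ug \in U$. The crux is well-definedness, i.e.\ the claim: if $h \in G$ and $v, hvh^{-1} \in U$, then $\log_U(hvh^{-1}) = \Ad(h)\log_U(v)$. Writing $h = b_1\dot w b_2$ by the Bruhat decomposition, conjugation by each $b_i \in B$ preserves $U$, and $\log_U$ intertwines it with $\Ad(b_i)$ (for the $T$-part by the root-lattice grading of the Campbell--Hausdorff formula, for the $U$-part by the conjugation identity $X\ast Y\ast(-X) = e^{\ad X}(Y)$), so one reduces to $h = \dot w \in N_G(T)$. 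In that case $hvh^{-1}\in U$ forces $v \in U\cap\dot w^{-1}U\dot w = \prod_{\alpha>0,\,w\alpha>0}U_\alpha$, and on this subgroup $\log_U$ is a fixed Campbell--Hausdorff-polynomial expression in the $X_\alpha$ that occur, hence commutes with the Lie algebra automorphism $\Ad(\dot w)$, using that conjugation by $\dot w$ on root subgroups and $\Ad(\dot w)$ on root vectors carry the same sign constants. This proves the claim, so $\log\colon\UU\to\NN$ is well defined and $\Ad(G)$-equivariant by construction, and it lands in $\NN$ because $\log_U(U)=\mathfrak{u}\subseteq\NN$ and $\NN$ is $\Ad(G)$-stable. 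Property (iii) for an arbitrary root follows by choosing $w$ with $w\alpha>0$; and (i) for a general $\sigma\in\Aut(G)$ reduces, after composing with an inner automorphism, to $\sigma$ stabilising $B$ and $T$, whereupon $u\mapsto d\sigma^{-1}\log(\sigma u)$ still satisfies (ii) and (iii) (the scalars in $\sigma(x_\alpha(t)) = x_{\pi\alpha}(c_\alpha t)$ cancel those in $d\sigma$), so it equals $\log$ by uniqueness.

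\emph{Isomorphism of varieties.} Running the same recipe with $\exp_U$ in place of $\log_U$ would produce an $\Ad(G)$-equivariant map $\exp\colon\NN\to\UU$, well defined by the identical Bruhat argument; since $\log$ and $\exp$ are mutually inverse on $U$ and $\mathfrak{u}$ and are both equivariant, they are mutually inverse everywhere, so $\log$ is a bijection. That $\log$ is a morphism I would get by descent: it becomes a morphism after pull-back along the proper surjection $\widetilde{\UU} := \{(B',u) : B'\text{ a Borel},\, u\in R_u(B')\}\to\UU$, where it is the fibrewise $\log_U$, i.e.\ the associated-bundle morphism $G\times^B\log_U$, and $\UU$ is normal, so the factored map $\UU\to\NN$ is again a morphism; one checks $\log\circ\exp$ and $\exp\circ\log$ are the identity on the dense open set of regular elements (where the containing Borel is unique) to conclude. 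For $G$ not simply connected one first treats the simply connected cover, where the stronger hypothesis $p>h(G)$ makes the argument go through, and then descends along the central isogeny, which restricts to an isomorphism on unipotent varieties. I expect the main obstacle to be exactly the descent/Borel-independence: showing that the equivariant \emph{bijection} $\log$ is genuinely a morphism of varieties, which is what the Bruhat computation above is really in service of.
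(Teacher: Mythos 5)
The paper itself contains no proof of this statement: it is quoted verbatim from Serre \cite[Thm.~3]{serre1} and used as a black box, so there is no internal argument to compare against. Your overall plan --- construct $\log$ on $U$ via the Campbell--Hausdorff structure on $\Lie(U)$, extend to all of $\UU$ by conjugacy of unipotent elements into $U$, check well-definedness via the Bruhat decomposition, obtain the morphism property by descent along the proper surjection $G\times^B U\to\UU$ using normality, and deduce uniqueness and $\Aut(G)$-equivariance from the fact that $U$ is generated by its root subgroups --- is in the same spirit as Serre's construction, and the uniqueness step, the reduction to $h\in B$ or $h=\dot w\in N_G(T)$, the treatment of outer automorphisms, and the descent to non-simply-connected $G$ are essentially correct as sketched (for the $\dot w$-step, the cleanest justification is that $U\cap\dot w^{-1}U\dot w$ is generated by its root subgroups, so the same uniqueness argument applies to it).

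The genuine gap is at the very first step. You assert that ``the identity map of $\Lie(U)$ read in exponential coordinates'' gives an isomorphism of algebraic groups $(\Lie(U),\mathrm{CH})\to U$ with identity differential sending $tX_\alpha\mapsto x_\alpha(t)$. In characteristic $p$ there are no exponential coordinates a priori; the existence of such an isomorphism is exactly the nontrivial characteristic-$p$ content of the theorem restricted to $U$, so as written the argument is circular. It is true, but requires an argument: for instance, work with the Chevalley $\ZZ$-form of $U$, observe that the nilpotency class of $U$ is $h(G)-1<p$ (the height of the highest root), so the classical logarithm of a product $\prod_{\alpha>0}x_\alpha(t_\alpha)$, computed by iterating the Campbell--Hausdorff formula, is a Lie polynomial of degree at most $h(G)-1$ in the $X_\alpha$ whose coefficients are $p$-integral; reducing modulo $p$ then produces $\log_U$ with properties (ii) and (iii), and also supplies the ``universal polynomial'' fact you invoke in the $\dot w$-computation. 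Two secondary points should also be recorded: the descent argument for $\exp$ needs normality of $\NN$ (not only of $\UU$), and in the non-simply-connected case the role of the stronger hypothesis $p>h(G)$ is to force the kernel of the central isogeny from the simply connected cover to have order prime to $p$, so that the isogeny is étale and really does restrict to an isomorphism of unipotent varieties, which is what your final descent uses.
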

 
 Let $\exp : \NN \to \UU$ be the inverse morphism to $\log$. We then define 
 \begin{equation}
 	\label{eq:sat}
 	 u^t := \exp(t \log u),
 \end{equation}
 for any $u \in \UU$ and any $t \in \BBG_a$.

\begin{defn}[{\cite{nori}, \cite{serre1}}]
	\label{def:saturation}
	Let $p \ge \widetilde{h}(G)$.  A subgroup $H$ of $G$ is \emph{saturated (in $G$)}  provided $H$ is closed and for any unipotent element $u$ of $H$ and any $t \in \BBG_a$ also $u^t$ belongs to $H$.
	For a subgroup $H$, its \emph{saturated closure}
	$H^\sat$ is the smallest saturated subgroup of $G$ containing $H$.	
\end{defn}

We give various fairly straightforward consequences of Theorem \ref{thm:log}. 
The third is already recorded in \cite{serre1} for centralizers of subgroups of $G$.
\begin{cor}
	\label{cor:centralizers}
 		Let $p \ge \widetilde{h}(G)$. Let $\sigma\in \Aut(G)$.
		Then the following hold:
			\begin{enumerate}[label=\rm(\roman*)]
			\item $\sigma(u^t) = \sigma(u)^t$ for any $u\in \UU$ and $t\in \BBG_a$;
			\item if $H$ is a $\sigma$-stable subgroup of $G$, so is $H^\sat$;  
			\item for $S$ a subgroup of $\Aut(G)$,  $C_G(S)$ is saturated in $G$. 
			\end{enumerate}
\end{cor}

\begin{proof}
(i). Since $\exp$ is the inverse to $\log$, Theorem \ref{thm:log}(i) gives $\sigma(\exp(X)) = \exp(d\sigma (X))$ for all $X\in \NN$.
Hence for any $u\in \UU$ and $t\in \BBG_a$,
$$
\sigma(u^t) = \sigma(\exp(t \log u)) = \exp(d\sigma(t \log(u))) = \exp(td\sigma(\log u)) = \exp(t\log \sigma(u)) = \sigma(u)^t. 
$$
(ii). If $H$ is $\sigma$-stable and $M$ is any saturated subgroup of $G$ containing $H$,
then so is $\sigma(M)$:
for, if $u\in \sigma(M)$ is unipotent, then $u = \sigma(v)$ for some 
$v\in M$ unipotent.
Then $u^t = \sigma(v)^t = \sigma(v^t) \in \sigma(M)$ also, by (i) and the fact that $M$ is saturated in $G$.
Hence $H^\sat$, the unique smallest saturated subgroup of $G$  containing $H$, must also be $\sigma$-stable.

Part (iii) is immediate by (i). 
\end{proof}

As particular instances of Corollary \ref{cor:centralizers}(iii), we note that centralizers of graph automorphisms of $G$ are saturated, 
and also Levi subgroups of parabolic subgroups of $G$ are saturated, since they arise as centralizers of tori in $G$.

One can use Theorem \ref{thm:log} directly to show that parabolic subgroups of $G$ are saturated.  
The following proof instead uses the language of cocharacters, which also allows us to observe that 
the process of ``taking limits along cocharacters'' commutes with saturation.

\begin{prop}\label{prop:P_saturated}
	Let $\lambda\in Y(G)$ and let $P=P_\lambda$. Then for $u\in P$ unipotent and $v:=\limau$, we have $\limaut=v^t$. In particular, $P$ is saturated.  
\end{prop}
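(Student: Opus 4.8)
The plan is to exploit the rigidity of the $\log$ map in Theorem~\ref{thm:log}, specifically the equivariance property (i), to move all computations from $G$ down into the Lie algebra, where ``taking limits along $\lambda$'' becomes a linear grading operation that manifestly commutes with scalar multiplication. First I would reduce to the semisimple (simply connected) case: since the defining condition involves only unipotent elements and the cocharacter $\lambda$, and since $Z(G)^\circ$ contains no nontrivial unipotents and is centralised by everything, both the limit map $c_\lambda$ and the power operation $u\mapsto u^t$ are compatible with the isogeny $G_1\times\cdots\times G_r\to G/Z(G)^\circ$; so it suffices to prove the statement there, where Theorem~\ref{thm:log} applies.

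Next, the key observation: for $u\in P_\lambda$ unipotent, write $n:=\log u\in\NN$. I would show that $\log$ intertwines conjugation by $\lambda(a)$ with the adjoint action, i.e. $\log\big(\lambda(a)u\lambda(a)^{-1}\big)=\Ad(\lambda(a))\,(\log u)$, which is exactly Theorem~\ref{thm:log}(i) applied to the inner automorphism $\mathrm{Int}(\lambda(a))\in\Aut(G)$, using $d\,\mathrm{Int}(\lambda(a))=\Ad(\lambda(a))$. Decompose $n=\sum_{i\in\ZZ} n_i$ into $\lambda$-weight spaces for the adjoint action; then $\Ad(\lambda(a))n=\sum_i a^i n_i$. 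Since $u\in P_\lambda$, the limit $\lima \Ad(\lambda(a))n$ exists (here one wants to know $u\in P_\lambda \Rightarrow n$ lies in the non-negative part; this follows because $\log$ restricts to an isomorphism $R_u(P_\lambda)\cdot$ (unipotent part of $L_\lambda$) onto the corresponding subalgebra, as $P_\lambda$, $R_u(P_\lambda)$ and $L_\lambda$ are all built from root subgroups on which $\log$ acts by Theorem~\ref{thm:log}(iii)), and equals $n_0$, the $\lambda$-fixed part. By Remark~\ref{rem:mophisms_lim} applied to the morphism $\exp$, we get $v=\limau=\exp(n_0)$, so $\log v = n_0$.

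Now the computation of $v^t$ is immediate: $v^t=\exp(t\log v)=\exp(t n_0)$ by \eqref{eq:sat}. On the other hand, $u^t=\exp(t n)$, and conjugating, $\lambda(a)u^t\lambda(a)^{-1}=\exp\big(t\,\Ad(\lambda(a))n\big)=\exp\big(\sum_i t a^i n_i\big)$; letting $a\to0$ and again invoking Remark~\ref{rem:mophisms_lim} for $\exp$, the limit exists (the relevant weights are $\ge 0$) and equals $\exp(t n_0)=v^t$. This proves $\limaut=v^t$. Finally, for the ``in particular'' clause: given a saturated subgroup is not what is asserted — rather, parabolic subgroups are saturated, which follows because $P_\lambda$ is generated by $T$ (which contains no nontrivial unipotents) together with the root subgroups $U_\alpha$ with $\langle\alpha,\lambda\rangle\ge 0$, and on each such $U_\alpha$ the operation $x_\alpha(s)\mapsto x_\alpha(s)^t$ is just $x_\alpha(s)\mapsto x_\alpha(ts)$ by Theorem~\ref{thm:log}(iii); alternatively, and more cleanly, every unipotent $u\in P_\lambda$ satisfies $u=\lima\lambda(a)^{-1}v\lambda(a)$-type relations tying $u^t$ to $v^t\in L_\lambda\subseteq P_\lambda$ together with an $R_u(P_\lambda)$-correction, all of which lie in $P_\lambda$.

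The main obstacle I anticipate is the bookkeeping in the reduction to the semisimple simply connected case: one must check that the canonical power operation $u\mapsto u^t$ defined via \eqref{eq:sat} is genuinely compatible with central isogenies (so that the statement for the cover implies it for $G$), and that $P_\lambda$ and its unipotent pieces behave well under these isogenies. This is routine but needs care, since Theorem~\ref{thm:log} is only stated for semisimple (simply connected, when $p=h(G)$) groups; the honest fix is to observe that unipotent elements, their $t$-th powers, and $P_\lambda\cap\UU$ all live in the derived group and map isomorphically under the relevant isogenies, so no information is lost. Everything else is a direct consequence of the equivariance of $\log$ under $\Ad(\lambda(a))$ and the continuity/uniqueness of limits in Remark~\ref{rem:mophisms_lim}.
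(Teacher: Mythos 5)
Your core argument is correct and rests on exactly the same two ingredients as the paper's proof: the $\Aut(G)$-equivariance of $\log$ (Theorem~\ref{thm:log}(i)) applied to the inner automorphism $\mathrm{Int}(\lambda(a))$, and the compatibility of morphisms with limits (Remark~\ref{rem:mophisms_lim}). The difference is that you descend to the Lie algebra and do the bookkeeping there via the $\lambda$-weight decomposition of $\log u$, whereas the paper stays in the group: it observes that $h_t\colon \UU\to\UU$, $u\mapsto u^t=\exp(t\log u)$, is a morphism of varieties, that equivariance gives $\bigl(\lambda(a)u\lambda(a)^{-1}\bigr)^t=\lambda(a)u^t\lambda(a)^{-1}$, and then applies Remark~\ref{rem:mophisms_lim} with $h=h_t$ to the curve $a\mapsto\lambda(a)u\lambda(a)^{-1}$, whose limit is $v$ by hypothesis. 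This makes your weight-space analysis, and in particular the claim that $\log u$ has only non-negative $\lambda$-weights, unnecessary. (If you do want that claim, the justification via root subgroups is shakier than needed, since a general unipotent element of $P_\lambda$ is not a root element in your fixed system; the clean route is to apply Remark~\ref{rem:mophisms_lim} with $h=\log$ to the same curve, which gives at once that $\lima\Ad(\lambda(a))\log u$ exists and equals $\log v$, forcing the negative components to vanish and $\log v=n_0$.)

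The one place your write-up genuinely needs repair is the ``in particular'' clause. The statement that $P_\lambda$ is saturated does not follow from $P_\lambda$ being generated by $T$ and root subgroups each closed under $t$-th powers: saturation quantifies over \emph{all} unipotent $u\in P_\lambda$, and $u^t$ is not the product of the $t$-th powers of factors of $u$. Your ``alternative'' justification is too vague to check. But no extra argument is needed at all: for $u\in P_\lambda$ unipotent, you have just shown that $\lima\lambda(a)u^t\lambda(a)^{-1}$ exists (and equals $v^t$), and by the definition $P_\lambda=\{g\in G\mid \lima\lambda(a)g\lambda(a)^{-1}\text{ exists}\}$ this is precisely the assertion that $u^t\in P_\lambda$. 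State that and you are done.
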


\begin{proof}
	Observe that for any $t\in \BBG_a$ the map $h_t:\UU \to \UU$ given on points by $h_t(u):= u^t = \exp(t\log(u))$ is an isomorphism of varieties, by Theorem \ref{thm:log}. Furthermore, by Corollary \ref{cor:centralizers}(i) we have for any $a\in k^*$
	\[\left(\lambda(a)u\lambda(a)^{-1}\right)^t=\lambda(a)u^t\lambda(a)^{-1}.\]
	The result now follows from Remark \ref{rem:mophisms_lim} and elementary limit calculations.
\end{proof}

The next result is a slight refinement of a theorem due to Serre.
 
  \begin{thm}[{\cite[Property 2, Thm.~4]{serre1}}]
 	\label{thm:hGhH}
 	Let $p\geq \widetilde{h}(G)$.  Let $H$ be a saturated connected reductive subgroup of $G$, and suppose $p\geq \widetilde{h}(H)$.  Then for any $u \in H$ unipotent, the element $u^t$, with respect to $H$, coincides with $u^t$, with respect to $G$: that is, saturation in $H$ coincides with saturation in $G$. 
 \end{thm}

\begin{rem}
 It follows from \cite[Thm.~4]{serre1} that if $H$ is any connected reductive subgroup of $G$ --- not necessarily saturated --- then $h(H)\leq h(G)$.  It can happen, however, that $\widetilde{h}(H)> \widetilde{h}(G)= p$.  For instance, let $p$ be an odd prime and let $G= \SL_p\times \SL_p$.  Let $M$ be a semisimple subgroup of $\SL_p$ such that $M$ is {\bf not} simply connected, and let $H= \SL_p\times M$.  Now $\widetilde{h}(G)= h(G)= h(\SL_p)= p$ and $h(H)\geq h(\SL_p)= p$; but $H$ is not simply connected, so $\widetilde{h}(H)= h(H)+ 1\geq p+1> p= \widetilde{h}(G)$.  Because of this we have added the hypotheses that $p\geq \widetilde{h}(G)$ and $p\geq \widetilde{h}(H)$ to Theorem~\ref{thm:hGhH}, although they were not stated explicitly in \cite[Property 2]{serre1}.  The proof of \emph{loc.~cit.} still holds.
\end{rem}

\begin{rem}\label{rem:regularsaturated}
Suppose $p\ge \widetilde{h}(G)$ and let $H$ be a connected reductive subgroup of $G$ normalized by some maximal torus $T$ of $G$.
We claim that $H$ is saturated in $G$.
Since $H$ and $HT$ contain the same unipotent elements, $H$ is saturated if and only if $HT$ is saturated, so we may assume that $H$ contains $T$.
By Corollary~\ref{cor:centralizers}(iii) and Theorem~\ref{thm:hGhH} there is no harm in passing to a minimal Levi subgroup of $G$ containing $H$, so we may assume that $H$ has maximal semisimple rank in $G$.
Since $p\ge \widetilde{h}(G)$, $p$ is good for $G$, and hence $H$ arises as the centralizer $H = C_G(s)^\circ$ in $G$ of some semisimple element $s$ of $G$, thanks to  
Deriziotis' criterion; cf.\ \cite[\S 2.15]{Humphreys}.
Thus $H$ is saturated by Corollary \ref{cor:centralizers}(iii).

Note that this result applies to any connected normal subgroup of $G$, so in particular to the simple factors of $G$.
\end{rem}

Next we show that saturation is compatible with direct products, in the following sense.

\begin{lem}
	\label{lem:products}
 Suppose $G= G_1\times\cdots\times G_r$, where each $G_i$ is connected and reductive.  Suppose $p\geq \widetilde{h}(G)$.  Then $p\geq \widetilde{h}(G_i)$ and $G_i$ is saturated for $1\leq i\leq r$.  Moreover, if $u_i\in G_i$ is unipotent for $1\leq i\leq r$ then
 $$ \log(u_1\cdots u_r)= \log_1(u_1)+\dots+ \log_r(u_r), $$
 where $\log$ denotes the logarithm map for $G$ and $\log_i$ denotes the logarithm map for $G_i$.
\end{lem}

\begin{proof}
 For each $i$, any simple factor of $G_i$ is also a simple factor of $G$ and $[G_i, G_i]$ is simply connected if $[G,G]$ is simply connected, so $\widetilde{h}(G)\geq \widetilde{h}(G_i)$ and the first assertion follows.  Each $G_i$ is normal in $G$ and hence is saturated by Remark~\ref{rem:regularsaturated}.  Theorem~\ref{thm:hGhH} implies that if $u_i\in G_i$ is unipotent then $\log_i(u_i)= \log(u_i)$.  If $X$ and $X'$ are commuting nilpotent elements of $\Lie(G)$ then the Baker-Campbell-Hausdorff product of $X$ and $X'$ is just $X+ X'$, so $\exp(X+ X')= \exp(X)\exp(X')$.  The final assertion now follows easily.
\end{proof}

\begin{cor}
\label{cor:product_proj}
 Assume the hypotheses of Lemma~\ref{lem:products}, and let $\pi_i\colon G\to G_i$ be the canonical projection.  If $H$ is a saturated subgroup of $G$ then $\pi_i(H)$ is a saturated subgroup of $G_i$.
\end{cor}

\begin{proof}
 This follows immediately from Lemma~\ref{lem:products}.
\end{proof}
		 
\begin{prop}[{\cite[Prop.\ 5.2]{serre2}}]
	\label{prop:saturation-index}
	If $H$ is saturated in $G$, then 
	$(H: H^\circ)$ is prime to $p$.
\end{prop}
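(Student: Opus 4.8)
The plan is to argue by contradiction. Since the notion ``saturated in $G$'' only makes sense when $p \ge h(G)$ (Definition~\ref{def:saturation} relies on the $\log$--$\exp$ maps of Theorem~\ref{thm:log}), I will work under that hypothesis throughout. Suppose $p$ divides $(H:H^\circ)$. The component group $H/H^\circ$ is finite, so by Cauchy's theorem it contains an element $\bar x$ of order $p$. The idea is to realise the unipotent part of $\bar x$ by an actual unipotent element of $H$ lying outside $H^\circ$, and then to contradict this using the copies of $\BBG_a$ that saturation forces into $H$.

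Concretely, I would first lift $\bar x$ to some $x \in H$ and take the Jordan decomposition $x = x_s x_u$ in $G$; since $H$ is closed, both $x_s$ and $x_u$ lie in $H$. Applying the quotient homomorphism $\pi\colon H \to H/H^\circ$, which is a morphism of algebraic groups, carries this to the Jordan decomposition $\bar x = \pi(x_s)\pi(x_u)$ in the finite group $H/H^\circ$. Inside a finite group, regarded as an algebraic group over a field of characteristic $p$, the semisimple elements are precisely the $p'$-elements and the unipotent elements are precisely the $p$-elements; since $\bar x$ has order $p$, this forces $\pi(x_s) = 1$ and $\pi(x_u) = \bar x$. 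Hence $u := x_u$ is a unipotent element of $H$ with $u \notin H^\circ$. Next, by \eqref{eq:sat} the map $\BBG_a \to G$, $t \mapsto u^t = \exp(t\log u)$, is a morphism of varieties, so $S := \{u^t \mid t \in \BBG_a\}$ is a connected subset of $G$ containing $u^0 = 1$. Saturation of $H$ gives $S \subseteq H$, and a connected subset of $H$ containing the identity necessarily lies in $H^\circ$, so $S \subseteq H^\circ$; in particular $u = u^1 \in H^\circ$, contradicting $u \notin H^\circ$. Therefore $p \nmid (H:H^\circ)$.

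The only step that requires genuine care is the production of the unipotent element $u$ outside $H^\circ$: this rests on the compatibility of Jordan decomposition with the quotient morphism $H \to H/H^\circ$, together with the elementary but essential identification of the unipotent elements of the finite group $H/H^\circ$ with its $p$-elements. Everything after that — that $\{u^t\}$ is a connected subgroup of $G$ and that saturation drags it into $H$ — is immediate from the definitions, so the argument is short once this middle point is in place.
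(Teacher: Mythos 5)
Your argument is correct: lifting the order-$p$ element of $H/H^\circ$ via Jordan decomposition (which is preserved by the quotient morphism, and which stays inside the closed subgroup $H$) produces a unipotent $u\in H\setminus H^\circ$, and saturation then forces the connected set $\{u^t\mid t\in\BBG_a\}\ni 1$, hence $u=u^1$, into $H^\circ$ --- a contradiction. The paper itself offers no proof, quoting the result from Serre \cite[Prop.~5.2]{serre2}, and your argument is essentially Serre's standard one, so there is nothing to add beyond the caveat you already make explicit, namely that the statement is read under the standing hypothesis $p\ge h(G)$ needed for $u^t$ to be defined.
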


\begin{thm}[{\cite[Thm.\ 5.3]{serre2}}]
	\label{thm:saturation}
	Let $p \ge \widetilde{h}(G)$. For a closed subgroup  $H$ of $G$,
	the following are equivalent:
	\begin{enumerate}[label=\rm(\roman*)]
			\item  $H$ is $G$-completely reducible;
			\item  $H^\sat$ is $G$-completely reducible;
			\item  $(H^\sat)^\circ$ is reductive. 
		\end{enumerate}
\end{thm}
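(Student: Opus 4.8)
The plan is to prove the two equivalences (i)$\,\Leftrightarrow\,$(ii) and (ii)$\,\Leftrightarrow\,$(iii). The second is essentially Theorem~\ref{thm:ag} applied to the group $H^{\sat}$ in place of $H$; the first rests on the observation that $H$ and $H^{\sat}$ are contained in exactly the same parabolic subgroups and exactly the same Levi subgroups of $G$.

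For the key structural input I would first note that parabolic subgroups of $G$ are saturated, by Proposition~\ref{prop:P_saturated}, and that Levi subgroups of $G$, being centralizers of tori, are saturated, by Corollary~\ref{cor:centralizers}. Since $H^{\sat}$ is by definition the smallest saturated subgroup of $G$ containing $H$, it follows that for every parabolic subgroup $P$ of $G$ one has $H\subseteq P$ if and only if $H^{\sat}\subseteq P$ (forward, because $P$ is saturated and contains $H$; backward, because $H\subseteq H^{\sat}$), and likewise $H\subseteq L$ if and only if $H^{\sat}\subseteq L$ for every Levi subgroup $L$ of $G$. Now I would deduce (i)$\,\Leftrightarrow\,$(ii): recalling that the Levi subgroups of a parabolic $P$ of $G$ are among the Levi subgroups of $G$, if $H$ is $G$-completely reducible and $P$ is a parabolic with $H^{\sat}\subseteq P$, then $H\subseteq P$, so $H$ lies in some Levi subgroup $L$ of $P$; since $L$ is a Levi subgroup of $G$ containing $H$, we get $H^{\sat}\subseteq L$, so $H^{\sat}$ is $G$-completely reducible. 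The reverse implication is the same argument with $H$ and $H^{\sat}$ interchanged, using that $H\subseteq P$ forces $H^{\sat}\subseteq P$.

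It remains to prove (ii)$\,\Leftrightarrow\,$(iii), which I would obtain by applying Theorem~\ref{thm:ag} to $H^{\sat}$. Its hypotheses are satisfied: $p\ge h(G)\ge a(G)$, and $(H^{\sat}:(H^{\sat})^{\circ})$ is prime to $p$ by Proposition~\ref{prop:saturation-index}, since $H^{\sat}$ is saturated. The conclusion of Theorem~\ref{thm:ag} then reads exactly that $(H^{\sat})^{\circ}$ is reductive if and only if $H^{\sat}$ is $G$-completely reducible, which is (iii)$\,\Leftrightarrow\,$(ii).

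As to where the real work sits: all of the substantive content is imported. The existence of the $\log/\exp$ correspondence of Theorem~\ref{thm:log} is what makes the operation $u\mapsto u^{t}$ of~\eqref{eq:sat} well defined and makes parabolic and Levi subgroups of $G$ saturated, and Theorem~\ref{thm:ag} together with the index bound of Proposition~\ref{prop:saturation-index} supplies the rest. The only point in the argument itself that needs care is unwinding the definition of $G$-complete reducibility: one must check that whether $H$ lies in \emph{a} Levi subgroup of $P$ is governed solely by which Levi subgroups of $G$ contain $H$, which is precisely why it matters that Levi subgroups of $G$, and not merely parabolic subgroups, are saturated.
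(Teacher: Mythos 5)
Your proof is correct and follows essentially the same route the paper indicates: (i)$\Leftrightarrow$(ii) from the fact that parabolic subgroups and Levi subgroups (as centralizers, via Corollary~\ref{cor:centralizers} and Proposition~\ref{prop:P_saturated}) are saturated, and (ii)$\Leftrightarrow$(iii) by applying Theorem~\ref{thm:ag} to $H^{\sat}$ using $h(G)\ge a(G)$ and Proposition~\ref{prop:saturation-index}. You have merely spelled out the details that the paper leaves as a remark after the theorem statement.
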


The equivalence between (i) and (ii) stems from the fact that both parabolic and Levi subgroups of $G$ are saturated.
Since $\widetilde{h}(G) \ge a(G)$ by \eqref{eqn:inequalities}, the equivalence between (ii) and (iii) is an immediate consequence of Theorem \ref{thm:ag} 
and Proposition \ref{prop:saturation-index}.

\begin{prop}
	\label{prop:HcrvsGcr}
	Let $K \subseteq H$ be closed subgroups of $G$ with $H$ connected reductive and saturated in $G$.  Suppose $p \ge \widetilde{h}(G)$ and $p \ge \widetilde{h}(H)$.
	Then $K$ is $H$-completely reducible if and only if $K$ is $G$-completely reducible.
\end{prop}

\begin{proof}
	By Theorem \ref{thm:saturation}, $K$ is $H$-cr if and only if $(K^\sat)^\circ$ is reductive, where we saturate in $H$.  But saturation in $H$ is the same as saturation in $G$ thanks to Theorem \ref{thm:hGhH}, so $(K^\sat)^\circ$ is reductive if and only if $K$ is $G$-cr, again by Theorem \ref{thm:saturation}. 
\end{proof}

Note that both implications in the equivalence in Proposition \ref{prop:HcrvsGcr} may fail if $p < h(G)$, e.g., see \cite[Ex.~3.45]{BMR} and \cite[Prop.~7.17]{BMRT}. 

For ease of reference, we recall a 
connectedness result for $H^\sat$ from 
\cite[Cor.~4.2]{BHMR}.

\begin{rem}
	\label{rem:Hsatconnected}
	Let $p \ge \widetilde{h}(G)$. If $H$ is a closed connected subgroup of $G$, then so is $H^\sat$.
	For, consider the subgroup $M$ of $G$ generated by 
	$H$ and the closed connected subgroups
	$\{u^t \mid t \in  {\mathbb G}_a\} \cong {\mathbb G}_a$  of $G$
	for each unipotent element $u \in G$.
	Then $M$ is connected.
	By definition, $M \subseteq H^\sat$.
	If $M \ne H^\sat$, then 
	by  repeating this process with $M$ (possibly several times), we eventually generate all of 
	$H^\sat$ by $H$ and closed connected subgroups of $G$
	isomorphic to ${\mathbb G}_a$. 
\end{rem}

Here is a further consequence of Theorem \ref{thm:saturation}.

\begin{cor}
	\label{cor:HcrvsGcr3}
	Let $p \ge \widetilde{h}(G)$. Let $K \subseteq H$ be closed subgroups of $G$ with $H$ connected reductive, and suppose that $p\geq \widetilde{h}(H)$.
	Then the following are equivalent:
	\begin{enumerate}[label=\rm(\roman*)]
		\item  $K$ is $H^\sat$-completely reducible;
		\item  $K^\sat$ is $H^\sat$-completely reducible;
		\item  $(K^\sat)^\circ$ is reductive;
		\item  $K^\sat$ is $G$-completely reducible;
		\item  $K$ is $G$-completely reducible. 
	\end{enumerate} 
\end{cor}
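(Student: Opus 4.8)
The plan is to show that all five conditions are equivalent by reducing each of them to the single statement ``$K$ is $G$-completely reducible'', using only the saturation results of Serre (Theorems~\ref{thm:saturation} and \ref{thm:hGhH}) together with Proposition~\ref{prop:HcrvsGcr}. First I would record the two structural facts that make the statement even meaningful. Since $p \ge h(G) \ge a(G)$ and $H$ is connected reductive, Theorem~\ref{thm:ag} shows that $H$ is $G$-cr; then Theorem~\ref{thm:saturation} gives that $(H^\sat)^\circ$ is reductive, and Remark~\ref{rem:Hsatconnected} gives that $H^\sat$ is connected, so $H^\sat$ is a connected reductive subgroup of $G$ which is moreover saturated by construction. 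In particular $h(H^\sat) \le h(G)$ by Theorem~\ref{thm:hGhH}(i), so saturation inside $H^\sat$ makes sense (although the argument only ever uses saturation inside $G$). Finally, since $K \subseteq H \subseteq H^\sat$ and $H^\sat$ is saturated, we have $K^\sat \subseteq H^\sat$. These observations guarantee that the phrases ``$K$ is $H^\sat$-cr'' and ``$K^\sat$ is $H^\sat$-cr'' refer to complete reducibility in a genuine connected reductive group, and that Proposition~\ref{prop:HcrvsGcr} applies with $H^\sat$ in the role of the saturated connected reductive overgroup.

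With this in hand, the equivalences follow by chaining the quoted results. Applying Theorem~\ref{thm:saturation} to the closed subgroup $K$ of $G$ yields (v) $\Leftrightarrow$ (iv) $\Leftrightarrow$ (iii). Applying Proposition~\ref{prop:HcrvsGcr} to the pair $K \subseteq H^\sat$ yields (i) $\Leftrightarrow$ (v). Applying Proposition~\ref{prop:HcrvsGcr} again, this time to the pair $K^\sat \subseteq H^\sat$, yields (ii) $\Leftrightarrow$ (iv). Putting these together closes the cycle and establishes the corollary.

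I do not expect a serious obstacle here: the mathematical content sits entirely in Theorems~\ref{thm:ag}, \ref{thm:saturation}, \ref{thm:hGhH} and Proposition~\ref{prop:HcrvsGcr}. The only point that requires a little care is the preliminary verification that $H^\sat$ is connected reductive rather than merely a closed subgroup of $G$ --- this is precisely what allows Proposition~\ref{prop:HcrvsGcr} to be invoked with $H^\sat$ as overgroup, and it is exactly where the hypothesis $p \ge h(G)$ is consumed (via $p \ge a(G)$, Theorem~\ref{thm:ag}, Theorem~\ref{thm:saturation}, and Remark~\ref{rem:Hsatconnected}). The containment $K^\sat \subseteq H^\sat$ is immediate from the minimality in Definition~\ref{def:saturation}, since $H^\sat$ is a saturated subgroup containing $K$.
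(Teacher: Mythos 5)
Your proof is correct and follows essentially the same route as the paper: first establish that $H^\sat$ is connected reductive (Remark \ref{rem:Hsatconnected}, Theorems \ref{thm:ag} and \ref{thm:saturation}), then chain the equivalences using Serre's saturation results, with (iii) $\Leftrightarrow$ (iv) $\Leftrightarrow$ (v) coming from Theorem \ref{thm:saturation} applied in $G$ exactly as in the paper. The only cosmetic difference is that you obtain (i) $\Leftrightarrow$ (v) and (ii) $\Leftrightarrow$ (iv) by invoking Proposition \ref{prop:HcrvsGcr} for the pairs $K \subseteq H^\sat$ and $K^\sat \subseteq H^\sat$ (legitimate and non-circular, since that proposition is proved beforehand directly from Theorems \ref{thm:saturation} and \ref{thm:hGhH}), whereas the paper applies Theorems \ref{thm:saturation} and \ref{thm:hGhH} directly to $K \subseteq H^\sat$ to link (i)--(iii); the underlying content is identical.
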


\begin{proof}
	Owing to Remark \ref{rem:Hsatconnected}, $H^\sat$ is connected. 
	Further, since 	$\widetilde{h}(G) \ge a(G)$, it follows from Theorems \ref{thm:ag} and \ref{thm:saturation} that 
	$H^\sat$ is reductive.
	
	The equivalence of (i) through (iii) follows from Theorem \ref{thm:saturation} applied to $K\subseteq H^\sat$ and Theorem \ref{thm:hGhH} and the equivalence of (iii) through (v) is just Theorem \ref{thm:saturation}.
\end{proof}

\begin{proof}[Proof of Theorem~\ref{thm:sat_cr}]
	Thanks to Remark \ref{rem:Hsatconnected}, $H^\sat$ is connected. 
	Since 	$d(G) \ge h(G) \ge a(G)$ by \eqref{eqn:inequalities}, it follows from Theorems \ref{thm:ag} and \ref{thm:saturation} that 
	$H^\sat$ is reductive.
	
	If $K$ is $H$-cr, then $K$ is $G$-cr, by Theorem~\ref{thm:main}, so   
	$K^\sat$ is $H^\sat$-cr, by Corollary \ref{cor:HcrvsGcr3}.		
\end{proof}

The following example illustrates that in general connected reductive subgroups are not saturated.

\begin{ex}
	\label{ex:SL_p}
	With the explicit notion of saturation from \eqref{eq:ut} within $\GL(V)$ it is easy to check that the image $H$ of the adjoint representation of $\SL_p$ in $G := \GL(\Lie(\SL_p))$ is not saturated in characteristic $p$, see \cite[p18]{serre1}.
	Evidently, $H$ is contained in the maximal parabolic subgroup $P$ of $G$ that stabilises the $H$-submodule $\zz(\Lie(\SL_p))$. One checks that its saturation $H^\sat$ in $G$ includes all of $H$ but also part of the 
	unipotent radical $R_u(P)$ of $P$.  For instance, when $p= 2$ then the adjoint representation of $H:= \SL_2$ with respect to a suitable basis is given by
	$$ {\rm Ad}\left(\left(\begin{array}{cc} a & b \\ c & d \end{array}\right)  \right)= \left(\begin{array}{ccc} a^2 & b^2 & 0 \\ c^2 & d^2 & 0 \\ ac & bd & 1 \end{array}\right). $$
	If $u= {\rm Ad}\left(\left(\begin{array}{cc} 1 & b \\ 0 & 1 \end{array}\right)\right)$ then ${\rm log}\,u= \left(\begin{array}{ccc} 0 & b^2 & 0 \\ 0 & 0 & 0 \\ 0 & b & 0 \end{array}\right)$, so $u^t= {\rm exp}(t\,{\rm log}\,u)= \left(\begin{array}{ccc} 1 & tb^2 & 0 \\ 0 & 1 & 0 \\ 0 & tb & 1 \end{array}\right)$ for any $t\in {\mathbb G}_a$.  We see that if $b\neq 0, 1$ then $u^{b^2}{\rm Ad}\left(\left(\begin{array}{cc} 1 & b^2 \\ 0 & 1 \end{array}\right)\right)=  \left(\begin{array}{ccc} 1 & 0 & 0 \\ 0 & 1 & 0 \\ 0 & b^2+ b^3 & 1 \end{array}\right)$ is a non-trivial element of $H^\sat\cap R_u(P)$, where $P$ is the parabolic subgroup of matrices of shape $\left(\begin{array}{ccc} * & * & 0 \\ * & * & 0 \\ * & * & * \end{array}\right)$.

		Since the abelian unipotent radical $R_u(P)$ is an irreducible $\SL_p$-module (of highest weight $\lambda_1 + \lambda_{p-1}$, or $2\lambda_1$ when $p = 2$), being a non-zero $\SL_p$-submodule of  $R_u(P)$, $U$ is in fact all of $R_u(P)$.
	So $H^\sat$ is of the form $H^\sat = XR_u(P)$, where $X$ is a subgroup of the Levi subgroup of type $\SL_{p^2 - 2}$ of $P$.  In particular, $H^\sat$ is not reductive in this case.
\end{ex}

We briefly revisit Example \ref{ex:notHsigmacr} in the context of saturation.

\begin{ex}
	\label{ex:notHsigmacr2}
	With the hypotheses and notation from Example \ref{ex:notHsigmacr}, a non-trivial unipotent element $u$ from $H_\sigma$ has order $p = 3$, so we can saturate $u$ in $H$ and in $G$, according to \eqref{eq:ut} above.
	Likewise we can saturate non-trivial unipotent elements in $H$. It turns out that $H$ is not saturated in $G$.
	Let $u = {\twobytwo{1}{b}{0}{1}}$ be in $\SL_2(9) = H_\sigma$ for a fixed $b \ne 1$.
	Then one can check that the saturations of $u$ in $H$ and in $G$ do not coincide. Thus, the hypotheses of Theorem \ref{thm:hGhH} fail on two accounts, for $H$ is not saturated in $G$ and $p = 3< h(G) = \dim V = 8$, while $p > h(H) = 2$.
\end{ex}

\section{Saturation and finite groups of Lie type}
\label{sec:sat-finite}

In this section we discuss finite subgroups of Lie type in $G$ and their behaviour under saturation.  To do this we need to prove the compatibility of the saturation map with standard Frobenius endomorphisms.  First recall that if $\sigma_q:G\to G$ is a standard $q$-power Frobenius endomorphism of $G$, then there exists a $\sigma_q$-stable
maximal torus $T$ and Borel subgroup $B\supseteq T$, and with respect to a chosen parametrisation of
the root groups as above, we have $\sigma_q(x_\alpha(s)) = x_\alpha(s^q)$ for each $\alpha\in \Phi$ and $s\in \BBG_a$, cf.~\cite[Thm.~1.15.4(a)]{GLS:1998}.

\begin{prop}
	\label{prop:frobsat}
	Let $p \ge \widetilde{h}(G)$.
Suppose $\sigma_q:G\to G$ is a standard $q$-power Frobenius endomorphism of $G$. Then the following hold:
\begin{enumerate}[label=\rm(\roman*)]
	\item  $\sigma_q(u^t) = \sigma_q(u)^{t^q}$ for any $u\in \UU$, $t \in \BBG_a$;
	\item if $H$ is a $\sigma_q$-stable subgroup of $G$, then $H^\sat$ is also $\sigma_q$-stable.
\end{enumerate}
\end{prop}

\begin{proof}
(i). Fix a $\sigma_q$-stable Borel subgroup $B$ of $G$ as in the discussion before the statement of the proposition, with unipotent radical $U$. 
Since we have $(gug^{-1})^t = gu^tg^{-1}$ for all $u\in \UU$, $g\in G$
thanks to Corollary \ref{cor:centralizers}(i),
it is enough to show the result for $u\in U$.

There are two ways to define a Frobenius-type map on $\Lie(U)$.
Firstly, since the $X_\alpha$ form a basis for $\Lie(U)$ as a $k$-space, we have the map $F_q:\Lie(U) \to \Lie(U)$
given by 
$$
\sum_{\alpha\in \Phi^+} c_\alpha X_\alpha \mapsto \sum_{\alpha\in\Phi^+} c_\alpha^qX_\alpha.
$$
For this map, it is clear that $F_q(tX) = t^qF_q(X)$ for every $t\in \BBG_a$ and $X\in \Lie(U)$.
Alternatively, since $\exp$ and $\log$ are mutually inverse group isomorphisms
between $U$ and $\Lie(U)$, there is some endomorphism $f_q:\Lie(U)\to \Lie(U)$
defined by 
$$
\sigma_q(\exp(X)) = \exp(f_q(X))
$$ 
for all $X\in \Lie(U)$ (or, equivalently, by $\log \sigma_q(u) = f_q(\log u)$ for all $u\in U$).
We claim that $f_q=F_q$.

First, note that Theorem \ref{thm:log}(iii) gives equality straight away for multiples of basis elements: 
since $\sigma_q(x_\alpha(s)) = x_\alpha(s^q)$ for each $s\in \BBG_a$ and positive root $\alpha$, we have
$f_q(sX_\alpha) = s^qX_\alpha = F_q(sX_\alpha)$ for all such $s$ and $\alpha$.
Now recall that if we fix some ordering of the positive roots $\Phi^+$, 
then each element $u\in U$ has a unique expression as a product $u = \prod_{\alpha\in \Phi^+} x_\alpha(s_\alpha)$ with the $s_\alpha\in \BBG_a$, cf.~\cite[Ch.~I, 1.2(b)]{springersteinberg}, and hence
every $X = \log(u)\in \Lie(U)$ has expression $X = \prod_{\alpha\in \Phi^+} (s_\alpha X_\alpha)$, where this product is calculated using the Baker-Campbell-Hausdorff formula, by Theorem \ref{thm:log}(ii) and (iii).
Thus, to show $f_q = F_q$ it is enough to show that $F_q$ is a group homomorphism.

Let $X = \sum_{\alpha\in \Phi^+} s_\alpha X_\alpha$ and $Y = \sum_{\alpha\in \Phi^+} t_\alpha X_\alpha$ be two elements of $\Lie(U)$.
In calculating $XY$ with the Baker-Campbell-Hausdorff formula we get a number of commutators involving the $s_\alpha X_\alpha$ and $t_\beta X_\beta$
for positive roots $\alpha, \beta$.
Since the Lie bracket is bilinear, we can pull all the coefficients $s_\alpha$ and $t_\beta$ out to the front of each commutator, and hence 
write $XY$ as a linear combination of commutators in the $X_\alpha$.
All such commutators of degree greater than $1$ can be rewritten in $\Lie(U)$ as a linear combination of the $X_\alpha$
by applying the commutator relations recursively to write any $[X_\beta,X_\gamma]$ in terms of the $X_\alpha$, and then
expanding out and repeating.
The coefficients appearing in the commutation relations lie in the finite base field $\mathbb{F}_p$, and hence are fixed under the $q$-power map.
Thus, for any commutator $C$ in the root elements $X_\alpha$, we may conclude that $F_q(C) = C$.
This is enough to conclude that $F_q(XY) = F_q(X)F_q(Y)$ for any $X,Y\in \Lie(U)$, as claimed.

We can now deduce that for any $t\in \BBG_a$ and any $X\in \Lie(U)$, we have $f_q(tX) = t^qf_q(X)$, and hence
for any $u\in U$ and any any $t\in \BBG_a$, 
$$
\sigma_q(u)^{t^q} = \exp(t^q \log \sigma_q(u)) = \exp (t^q f_q(\log u)) = \exp(f_q (t\log u)) = \sigma_q(\exp (t\log u)) = \sigma_q(u^t),
$$ 
which completes the proof of (i).

(ii). This follows quickly from (i), since if $H$ is $\sigma_q$-stable and $M$ is any saturated subgroup of $G$ containing $H$,
then $\sigma_q(M)$ is another saturated subgroup of $G$ containing $H$:
for, if $t\in \BBG_a$ and $u\in \sigma_q(M)$ is unipotent, then we may find $s\in \BBG_a$ with $s^q = t$ (since $k = \bar{k}$ is perfect)
and $v\in M$ which is unipotent such that $u = \sigma_q(v)$.
Then $u^t = \sigma_q(v)^{s^q} = \sigma_q(v^s) \in \sigma_q(M)$ also.
Hence $H^\sat$, which is the smallest saturated subgroup containing $H$, must also be $\sigma_q$-stable.
\end{proof}
 
Combining Corollary \ref{cor:centralizers} and Proposition \ref{prop:frobsat}, we obtain the following.

\begin{cor}
	\label{cor:steinbergsat}
		Let $p \ge \widetilde{h}(G)$.
	Suppose $\sigma :G\to G$ is a Steinberg endomorphism of $G$ such that $\sigma = \tau \sigma_q$, where $\tau \in \Aut(G)$ and $\sigma_q$ is a   
	standard $q$-power Frobenius endomorphism of $G$. Then the following hold:
	\begin{enumerate}[label=\rm(\roman*)]
		\item  $\sigma(u^t) = \sigma(u)^{t^q}$ for any $u\in \UU$, $t \in \BBG_a$;
		\item if $H$ is a $\sigma$-stable subgroup of $G$, then $H^\sat$ is also $\sigma$-stable.
	\end{enumerate}
\end{cor} 
	
\begin{proof}
	(i). By Corollary \ref{cor:centralizers}(i) and Proposition \ref{prop:frobsat}(i), we have for any $u\in \UU$, $t \in \BBG_a$,
	\[
	\sigma(u^t) = \tau(\sigma_q(u^t)) = \tau(\sigma_q(u)^{t^q}) =  \tau(\sigma_q(u))^{t^q}  = \sigma(u)^{t^q},
	\]
	as desired.
	
	Part (ii) follows from the arguments in the proofs of Corollary \ref{cor:centralizers}(ii) and Proposition \ref{prop:frobsat}(ii) along with part (i).
\end{proof}

We note that in general, a Steinberg endomorphism of a reductive group $G$ need
not be of the form given in Corollary \ref{cor:steinbergsat}, e.g., see Example \ref{ex:permut}.

We now apply Theorem \ref{thm:sat_cr} to the case when $K = H_\sigma$ for a Frobenius endomorphism $\sigma$ of $G$.  The next result is immediate from Lemma \ref{lem:GsigmaGcr} and Theorem \ref{thm:sat_cr}. 

\begin{cor}
	\label{cor:Hsigmasatcr1}
	Let $H \subseteq G$ be connected reductive groups. Suppose $p \ge d(G)$  and $p \ge \widetilde{h}(G)$.
	Let $\sigma\colon G\to G$ be a Steinberg endomorphism that stabilises $H$.  
	Then $(H_\sigma)^\sat$ is $H^\sat$-completely reducible. 
\end{cor}

We can potentially improve the bound on $p$ in the last corollary at the expense of imposing the conditions from 
Proposition \ref{prop:HcrvsHsigmacr}, as follows.

\begin{cor}
	\label{cor:Hsigmasatcr}
Suppose $G, H$ and $\sigma$ satisfy the hypotheses of Proposition \ref{prop:HcrvsHsigmacr}. Suppose in addition that $p\geq 
\widetilde{h}(G)$. 
	Then $(H_\sigma)^\sat$ is $H^\sat$-completely reducible. 
\end{cor}

\begin{proof}
	Since $p \ge \widetilde{h}(G) \ge a(G)$ by \eqref{eqn:inequalities}, $H$ is $G$-cr, by Theorem \ref{thm:ag}. Thus $H_\sigma$ is $G$-cr, by Proposition \ref{prop:HcrvsHsigmacr}. The result now follows from Corollary \ref{cor:HcrvsGcr3}. 
\end{proof}

Note that in 
Theorem \ref{thm:sat_cr} and 
Corollaries \ref{cor:Hsigmasatcr1} and \ref{cor:Hsigmasatcr} 
$H^\sat$ is again connected reductive. This follows from the fact that $\widetilde{h}(G) \ge a(G)$, Theorems \ref{thm:ag},  
\ref{thm:saturation} and Remark \ref{rem:Hsatconnected}.

Example \ref{ex:Tsigma} shows that generically the conditions of  
Corollary \ref{cor:Hsigmasatcr} are fulfilled.
Nevertheless, Example \ref{ex:notHsigmacr} and 
Corollary \ref{cor:HcrvsGcr3} show that Corollary \ref{cor:Hsigmasatcr} is false if condition (i) of Proposition \ref{prop:HcrvsHsigmacr} is not satisfied.  
In the settings of Corollaries \ref{cor:Hsigmasatcr1} and \ref{cor:Hsigmasatcr}, 
 $((H_\sigma)^\sat)^\circ$ is reductive.

Assume $G$ is simple for the rest of this section unless specified otherwise. Let $\sigma\colon G\to G$ be a Steinberg endomorphism.  Then $\sigma$ 
is a generalized Frobenius map, i.e., a suitable power of $\sigma$ is a standard Frobenius map 
(e.g., see \cite[Thm.\ 2.1.11]{GLS:1998}), 
and the possibilities for $\sigma$ are well known 
(\cite[\S 11]{steinberg:end}): 
$\sigma$ is conjugate to either $\sigma_q$, 
$\tau \sigma_q$, $\tau' \sigma_q$ or $\tau'$, 
where $\sigma_q$ is a standard Frobenius 
morphism, $\tau$ is an automorphism of 
algebraic groups coming from a graph 
automorphism of types $A_n$, $D_n$ or $E_6$, 
and $\tau'$ is a bijective endomorphism 
coming from a graph automorphism of type 
$B_2$ ($p=2$), $F_4$ ($p=2$) or $G_2$ ($p=3$). 
The latter instances only occur in bad characteristic, so are not relevant here.
If $\tau = 1$, then we say that $G_\sigma$ is \emph{untwisted}, else $G_\sigma$ is \emph{twisted}.
Note that, since $G$ is simple, $\tau$ and $\sigma_q$ commute.
Note also that $C_G(\tau)$ is  again simple (e.g., see \cite[Thm.\ 1.15.2(d)]{GLS:1998}).

\begin{thm}
	\label{thm:Hsigmasat=H}
	Suppose $G$ is simple.	
	Let $\sigma = \tau \sigma_q$ be a Steinberg endomorphism of $G$ and $H$ a connected semisimple $\sigma$-stable subgroup of $G$.  Assume $p \ge \widetilde{h}(G)$ and $p \ge \widetilde{h}(H)$. 
	Then $H^\sat$ is also $\sigma$-stable, and we have:
	\begin{enumerate}[label=\rm(\roman*)]
		\item  if $\tau = 1$, then $(G_\sigma)^\sat = G$;
		\item  if $\tau = 1$ and $H$ is saturated in $G$, then $(H_\sigma)^\sat = H$;
		\item  		$(G_\sigma)^\sat = C_G(\tau)$;
		\item  
		if $H$ is saturated in $G$, and both $\tau$ and $\sigma_q$ stabilise $H$ separately, then $(H_\sigma)^\sat = C_H(\tau)$;
		\item  if $H$ is saturated in $G$, then
		$((H_\sigma)^\sat)_\sigma = H_\sigma$.
	\end{enumerate}
\end{thm}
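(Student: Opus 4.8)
\textit{Plan.} The five parts are increasingly refined statements all ultimately resting on the same engine: the Lang–Steinberg theorem (via Lemma~\ref{lem:GsigmaGcr}) together with Serre's characterisation of saturation and $G$-complete reducibility in Theorem~\ref{thm:saturation}. I would prove (i), (iii), (ii), (iv), (v) roughly in that order, bootstrapping each from the previous ones.

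\textit{Part (i).} Set $K := (G_\sigma)^\sat$. By Lemma~\ref{lem:GsigmaGcr}, $G_\sigma$ is $G$-irreducible, hence $G$-cr; by Theorem~\ref{thm:saturation} so is $K$, and $K^\circ = (K^\sat)^\circ$ is reductive. Since $K$ is saturated and contains $G_\sigma$, one has $G_\sigma \subseteq K^\circ$ because $(K : K^\circ)$ is prime to $p$ (Proposition~\ref{prop:saturation-index}) while $G_\sigma$ is generated by its unipotent elements (as $p \ge h(G)$, by Steinberg's generation theorem for untwisted groups), each of which together with its $\BBG_a$-orbit lies in the connected group $K^\circ$. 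Now $K^\circ$ is a connected reductive $\sigma$-stable subgroup containing $G_\sigma$; I claim $K^\circ = G$. Indeed, if $K^\circ \subsetneq G$ were proper, then $K^\circ$ lies in a proper parabolic or in a proper $\sigma$-stable reductive subgroup of strictly smaller dimension, and in either case $G_\sigma \subseteq K^\circ$ would force $G_\sigma$ into a proper subgroup on which it is not irreducible — the cleanest way to get a contradiction is: $C_G(K^\circ)$ is $\sigma$-stable and, $K^\circ$ being $G$-irreducible and $\sigma$-stable, $C_G(K^\circ)^\circ$ is a $\sigma$-stable torus; comparing dimensions via the Lang–Steinberg count $|(K^\circ)_\sigma| \approx q^{\dim K^\circ}$ against $|G_\sigma| \approx q^{\dim G}$ and $(K^\circ)_\sigma = G_\sigma$ forces $\dim K^\circ = \dim G$, so $K^\circ = G$ by connectedness, whence $K = G$.

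\textit{Part (iii).} Write $H_0 := C_G(\tau)$, which is connected reductive (in fact simple) by the remark preceding the theorem, is $\sigma_q$-stable, and is saturated by Corollary~\ref{cor:centralizers}. Since $\tau$ commutes with $\sigma_q$ and $G_\sigma = (C_G(\tau))_{\sigma_q} = (H_0)_{\sigma_q}$, applying part (i) \emph{inside $H_0$} (legitimate since $h(G) \ge h(H_0)$ by Theorem~\ref{thm:hGhH}(i), and $\sigma_q$ restricts to an untwisted standard Frobenius of $H_0$) gives $(G_\sigma)^{\sat_{H_0}} = H_0$; and by Theorem~\ref{thm:hGhH}(ii) saturation in $H_0$ coincides with saturation in $G$ since $H_0$ is saturated in $G$, so $(G_\sigma)^\sat = H_0 = C_G(\tau)$.

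\textit{Parts (ii), (iv).} These are exactly (i), (iii) with $G$ replaced by the saturated connected reductive subgroup $H$: since $H$ is saturated in $G$, saturation in $H$ agrees with saturation in $G$ (Theorem~\ref{thm:hGhH}(ii)), and $h(G) \ge h(H)$ (Theorem~\ref{thm:hGhH}(i)) so the hypothesis $p \ge h(H)$ is inherited. For (ii) apply (i) to $(H, \sigma_q|_H)$; for (iv) the extra hypothesis that $\tau$ and $\sigma_q$ stabilise $H$ separately lets us apply (iii) to $(H, \tau|_H, \sigma_q|_H)$, noting $C_H(\tau) = H \cap C_G(\tau)$ is again connected reductive and saturated, giving $(H_\sigma)^\sat = C_H(\tau)$.

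\textit{Part (v).} Here we no longer assume $\tau, \sigma_q$ stabilise $H$ individually, only that $\sigma = \tau\sigma_q$ does. By (ii) applied carefully — or rather by running the argument of (i) inside $H$ with the (possibly twisted) endomorphism $\sigma|_H$ — one has $(H_\sigma)^\sat \subseteq H$, so each translate $\sigma^i((H_\sigma)^\sat)$ lies in $\sigma^i(H) = H$ (as $\sigma$ stabilises $H$), hence $K \subseteq H$. By construction $K$ is $\sigma$-stable, connected (generated by the connected saturated groups $\sigma^i((H_\sigma)^\sat)$, using Remark~\ref{rem:Hsatconnected} and that $(H_\sigma)^\sat$ is connected since $H_\sigma$ is generated by unipotents of order $p$), reductive (each translate is $\sigma^i$ applied to a saturated $G$-cr group, hence $G$-cr with reductive saturation, and... ), and contains $H_\sigma$. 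Thus $K_\sigma \supseteq H_\sigma$; conversely $K \subseteq H$ gives $K_\sigma \subseteq H_\sigma$, so $K_\sigma = H_\sigma$.

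\textit{Main obstacle.} The crux throughout is the step in (i) that upgrades ``$(G_\sigma)^\sat$ is $G$-cr with reductive identity component containing $G_\sigma$'' to ``equals $G$'': one must rule out $G_\sigma$ sitting $G$-irreducibly inside a proper connected reductive $\sigma$-stable subgroup of smaller dimension. The honest way is the Lang–Steinberg order estimate $|M_\sigma| = q^{\dim M}(1 + O(q^{-1}))$ combined with $(K^\circ)_\sigma = G_\sigma$ forcing $\dim K^\circ = \dim G$; making this rigorous (in particular handling the twisted case in part (v), where $\sigma$ acts on $H$ without a clean product decomposition, and checking reductivity of $K$ in (v)) is where the real care is needed. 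Everything else is a formal consequence of Theorems~\ref{thm:hGhH}, \ref{thm:saturation} and Corollary~\ref{cor:centralizers}.
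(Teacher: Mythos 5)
Your reductions for (iii) and (iv) (using that $\tau$ and $\sigma_q$ commute, Corollary~\ref{cor:centralizers}, and Theorem~\ref{thm:hGhH}) and the containment chase $H_\sigma\subseteq K_\sigma\subseteq H_\sigma$ in (v) are essentially the paper's. The problem is part (i), on which you make everything else rest, and which you yourself flag as the ``main obstacle'': your argument there has a genuine gap. First, the claim that $G_\sigma$ is generated by its unipotent elements (needed to place $G_\sigma$ inside $K^\circ$) is false when $G$ is not simply connected --- e.g.\ the unipotent elements of $\PGL_n(q)$ generate only the image of $\SL_n(q)$, which has index $\gcd(n,q-1)$. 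Second, the Lang--Steinberg count does not do what you want: the usable bounds are of the shape $(q-1)^{\dim M}\le |M_\sigma|\le (q+1)^{\dim M}$, and from $(K^\circ)_\sigma=G_\sigma$ these do not force $\dim K^\circ=\dim G$ unless $q$ is large, whereas here $q$ can be as small as $p\ge h(G)$ (even $q=2$ in type $A_1$); for such $q$ the inequality $(q-1)^{\dim G}>(q+1)^{\dim G-1}$ you would need simply fails. Ruling out proper positive-dimensional overgroups of $G_\sigma$ in general is a hard classification-type statement, not something to be waved through, and you do not complete it. Since (ii)--(iv) in your plan are bootstrapped from (i), the gap propagates.

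The paper takes a completely different and much more elementary route, which avoids any overgroup analysis: for $\sigma=\sigma_q$ standard, each root subgroup $U_\alpha$ of $G$ (relative to a $\sigma$-stable maximal torus) is $\sigma$-stable and meets $G_\sigma$ non-trivially (Lang--Steinberg applied to $U_\alpha$, using that $T$ acts transitively on its non-trivial elements); then Theorem~\ref{thm:log}(iii) together with \eqref{eq:sat} shows that the saturation of a single non-trivial element of $U_\alpha$ is all of $U_\alpha$, so $(G_\sigma)^\sat$ contains every root subgroup and hence equals $G$. Part (ii) is the same argument run in the simple components of $H$, combined with Theorem~\ref{thm:hGhH}(ii), and (iii), (iv) then follow exactly as you indicate. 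Finally, in (v) your justification of $(H_\sigma)^\sat\subseteq H$ by ``running the argument of (i) inside $H$ with the possibly twisted $\sigma|_H$'' is both unsupported (your (i) only treats the untwisted case) and unnecessary: since $H_\sigma\subseteq H$ and $H$ is saturated, $(H_\sigma)^\sat\subseteq H^\sat=H$ immediately, and no connectedness or reductivity of $K$ is needed for the conclusion.
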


\begin{proof}
The fact that $H^\sat$ is $\sigma$-stable
follows from Corollary \ref{cor:steinbergsat}.

	For the rest of the proof, there is no loss in assuming that both $G$ and $H$ are  generated by their respective root subgroups relative to some fixed maximal $\sigma$-stable tori $T_H \subseteq T_G = T$.
	
	(i) and (ii). 
	If $\tau = 1$, i.e., if $\sigma = \sigma_q$ is standard, then every root subgroup of $G$ meets $G_\sigma$ non-trivially. (For, each root subgroup $U_\alpha$ of $G$ is $\sigma$-stable and the $\sigma$-stable maximal torus $T$ acts transitively on $U_\alpha$. So the result follows from the Lang-Steinberg Theorem.)
	It thus follows from Theorem \ref{thm:log}(iii) and \eqref{eq:sat} that $(G_\sigma)^\sat$ contains each root subgroup of $G$;
	thus (i) follows.
	The same argument applies for (ii) by considering the simple components of $H$ and the fact that saturation in $H$ coincides with saturation in $G$, by Theorem \ref{thm:hGhH}(ii).
	
	(iii).
	Since $\tau$ and $\sigma_q$ commute, we have
	$G_\sigma = C_G(\tau)_{\sigma_q}$.
	Since $C_G(\tau)$ is saturated in $G$, by Corollary \ref{cor:centralizers}(iii), the result follows from part (ii).
	
	(iv). Again, since $\tau$ and $\sigma_q$ commute, we have
	$H_\sigma = C_H(\tau)_{\sigma_q}$.
	Now $C_H(\tau)$ is saturated in $H$, by Corollary \ref{cor:centralizers}.  But since $H$ is saturated in $G$, saturation in $H$ coincides with saturation in $G$, by Theorem \ref{thm:hGhH}(ii), so the result follows from part (ii).
	
	(v). Thanks to Corollary \ref{cor:steinbergsat}, $(H_\sigma)^\sat$ is $\sigma$-stable. Thus, since $H_\sigma \subseteq (H_\sigma)^\sat$ and 
	$(H_\sigma)^\sat \subseteq H^\sat = H$,  
	we have $H_\sigma \subseteq ((H_\sigma)^\sat)_\sigma \subseteq H_\sigma$, and equality follows.	
\end{proof}

\begin{proof}[Proof of Theorem~\ref{thm:nori}]
 This follows immediately from Theorem~\ref{thm:Hsigmasat=H}(iii).
\end{proof}

\begin{rem}
	\label{rem:Hsigmasat=H}
	We note that Theorem \ref{thm:Hsigmasat=H}(v) generalises \cite[Thm.~B(1)]{nori}:
	If $G = \SL_n(k)$, $\sigma = \sigma_q$ is a standard Frobenius endomorphism of $G$, and $H$ is a  $\sigma$-stable subgroup of $G$, then it follows directly from \eqref{eq:ut} that $H^\sat$ is  again $\sigma$-stable. Thus in particular, if $H$ is a connected, saturated semisimple $\sigma$-stable subgroup of $G$, then by Theorem \ref{thm:Hsigmasat=H}(v) we have $((H_\sigma)^\sat)_\sigma = H_\sigma$;
	see \cite[Thm.~B(1)]{nori}.
\end{rem}

We consider some explicit examples for Theorem \ref{thm:Hsigmasat=H}.

\begin{ex}
	\label{ex:unitary}
	Let $G = \SL_n$ and let $\sigma$ be the Steinberg endomorphism of $G$  given by
	\[
	g \mapsto \sigma_q (n_0 ({}^tg^{-1}) n_0^{-1} ),
	\]
	where ${}^tg$ denotes the transpose of the matrix $g$ and $n_0$ is 
	the antidiagonal permutation matrix of $\GL_n$, normalizing $\SL_n$. Note that $\tau(g) = n_0 (^tg^{-1})n_0^{-1}$ is the graph automorphism of $G$.
	Then $\sigma^2 = \sigma_{q^2}$ is a standard  Frobenius map of $G$ given by raising coefficients
	to the $(q^2)^{\rm th}$ power.
	Note that $G_\sigma = \SU(q)$ is the special unitary subgroup of $G$.
	We have $\SU(q) = G_\sigma \subseteq G_{\sigma^2} = \SL(\BBF_{q^2})$, and since $\sigma_q$ commutes with $\tau$, we have (assuming $p \ge n$)
	$(G_\sigma)^\sat = C_G(\tau)$, by Theorem \ref{thm:Hsigmasat=H}(iii), while  $(G_{\sigma^2})^\sat = G$, by Theorem \ref{thm:Hsigmasat=H}(i).
\end{ex}

In the case when $G$ is no longer simple, additional kinds of Steinberg endomorphisms are possible.

\begin{ex}
	\label{ex:permut}
		Let $H$ be a semisimple group defined over $\BBF_q$ and let $\sigma_q$ be the corresponding standard Frobenius map of $H$. Let $G = H \times \cdots \times H$ ($r$ factors) and let $\Delta H$ be the diagonal copy of $H$ in $G$.
	Let $\pi$ be the $r$-cycle permuting the $r$ direct copies of $H$ of $G$ cyclically and let $f = (\sigma_q, id_H, \ldots, id_H) : G \to G$. Then $\sigma = \pi f$ is a Steinberg endomorphism of $G$ where $\pi$ and $f$ do not commute.
	We have $G_\sigma = (\Delta H)_{\sigma_{q}}$, where by abuse of notation $\sigma_{q}$ is a standard Frobenius map on $\Delta H$. (Note that $(\Delta H)_{\sigma_{q}}$ is isomorphic to $H_{\sigma_{q}} = H(\BBF_{q})$.)
	Now suppose $p\ge \widetilde{h}(H) = \widetilde{h}(G)$. Then $\Delta H$ is saturated in $G$, by Lemma~\ref{lem:products}.
	Thus $(G_\sigma)^\sat = ((\Delta H)_{\sigma_{q}})^\sat = \Delta H$, by Theorem \ref{thm:Hsigmasat=H}(ii).
\end{ex} 

We present an instance where Theorem \ref{thm:Hsigmasat=H}(i) can be applied even 
though $G$ is not simple and $\sigma$ is 
a Steinberg endomorphism which is not a generalized Frobenius endomorphism.

\begin{ex}
	\label{ex:sl2sl2}
	Let $p \ge 2$.
	Let $\sigma_p, \sigma_{p^2}$ be the standard Frobenius maps of $\SL_2$ given by raising coefficients
	to the $p^{\rm th}$ and $(p^2)^{\rm th}$ powers, respectively. Let $G = \SL_2 \times \SL_2$.
	Then the map 
	$\sigma = \sigma_p \times \sigma_{p^2} : G  \rightarrow G$
	is a Steinberg morphism of $G$ that is not a generalized Frobenius morphism (cf.\
	the remark following \cite[Thm.~2.1.11]{GLS:1998}).
	We have $G_\sigma = \SL_2(\BBF_p) \times \SL_2(\BBF_{p^2})$. 
	The saturation map in $G$ is given by the formula from \eqref{eq:ut}.  (If $p\geq 5$ then this follows from Theorem~\ref{thm:hGhH}, since the image of the canonical embedding of $G$ into $\GL_4(k)$ is a saturated subgroup of $\GL_4(k)$, but it is easily verified for $p< 5$ also.)  By Lemma~\ref{lem:products}, saturating $G_\sigma$ inside $G$ amounts to saturating each factor of $G_\sigma$ inside each factor of $G$. Now, by applying Theorem \ref{thm:Hsigmasat=H}(i) to each factor of $G$, we get $(G_\sigma)^\sat = G$.
\end{ex}

\section{Saturation and semisimplification}
\label{sec:semisat}

	\begin{defn}[{\cite[Def.~4.1]{BMR:semisimplification}}]
		Let $H$ be a subgroup of $G$. We say that a subgroup $H'$ of $G$ is a \emph{semisimplification of $H$ (for $G$)} if there exists a parabolic subgroup $P=P_\lambda$ of $G$ and a Levi subgroup $L=L_\lambda$ of $P$ such that $H\subset P_\lambda$ and $H'=c_\lambda(H)$, and $H'$ is $G$-completely reducible. We say the pair $(P,L)$ \emph{yields} $H'$.
	\end{defn}

The following consequence of Proposition \ref{prop:P_saturated} shows that passing to a semisimplification of a subgroup of $G$ and saturation are naturally compatible.

	\begin{cor}
		\label{cor:semisat}
		Suppose $p\geq \widetilde{h}(G)$.  Let $H'$ be a semisimplification of $H$ yielded by $(P,L)$. Then a semisimplification of $H^\sat$ is given by $\left(H'\right)^\sat$ and is yielded also by $(P,L)$. Moreover, any semisimplification of $H^\sat$ is $G$-conjugate to $\left(H'\right)^\sat$.
	\end{cor}

\begin{proof}
	By Lemma \ref{lem:Plambda} there exists a $\lambda \in Y(G)$ such that $P=P_\lambda, L=L_\lambda$ and $H'=c_\lambda(H)$. According to Proposition \ref{prop:P_saturated}, we have $\left(H'\right)^\sat=\left(c_\lambda(H)\right)^\sat=c_\lambda(H^\sat)$. 
	Since $H'$ is $G$-cr, by definition, $(H')^\sat$ is $G$-cr by Theorem \ref{thm:saturation}.	
	The final statement follows from \cite[Thm.~4.5]{BMR:semisimplification}.
	\end{proof}

In Example \ref{ex:SL_p}, the subgroup $H$ considered is connected, non-saturated and not $G$-cr. In our next example, we give a connected, non-saturated but $G$-cr subgroup.

\begin{ex}
	\label{ex:nonsatGcr}
	Consider the semisimple $\SL_2$-module $L(1)\oplus  L(p) = L(1)\oplus L(1)^{[p]}$, i.e., $\SL_2$ acting with a Frobenius twist on the second copy of the natural module and without such on the first copy. This defines a diagonal
	embedding of $\SL_2$ in $M := \SL_2 \times \SL_2 \subseteq G = \GL_4$. 
	The image $H$ of $\SL_2$ in $G$ is $G$-cr and it is not saturated in $G$ (and also not in the saturated $G$-cr subgroup $M$ of $G$). 
	The argument is similar to the one in Example \ref{ex:SL_p}.
	Note that $M$ is the saturation of $H$ in $G$. 
\end{ex}

We close this section by noting that in general homomorphisms are not compatible with saturation.
For instance, take the inclusion of a connected reductive, non-saturated subgroup $H$ in $G$.
See also Examples \ref{ex:SL_p} and \ref{ex:nonsatGcr}. 

\bigskip
\noindent {\bf Acknowledgments}: We are grateful to J-P.~Serre for some suggestions on an earlier version of the manuscript.
The research of this work was supported in part by
the DFG (Grant \#RO 1072/22-1 (project number: 498503969) to G.~R\"ohrle).  For the purpose of open access, the authors have applied a Creative Commons Attribution (CC BY) licence to any Author Accepted Manuscript version arising from this submission.


\end{document}